\title{Diophantine equations in moderately many variables}
\author{Oscar Marmon}
\date{}
\address{Department of Mathematical Sciences \\ University of Copenhagen\\ Universitetsparken 5 \\ 2100 København Ø \\Denmark}
\DeclareMathOperator{\Proj}{Proj}
\DeclareMathOperator{\Sing}{Sing}
\DeclareMathOperator{\Rad}{Rad}
\DeclareMathOperator{\rank}{rank}
\begin{document}

\newcommand{\epsi}{\varepsilon}
\newcommand{\xx}{\mathbf{x}}
\newcommand{\xxi}{\boldsymbol{\xi}}
\newcommand{\eeta}{\boldsymbol{\eta}}
\newcommand{\0}{\boldsymbol{0}}
\newcommand{\bb}{\mathbf{b}}
\newcommand{\uu}{\mathbf{u}}
\newcommand{\ee}{\mathbf{e}}
\newcommand{\vv}{\mathbf{v}}
\newcommand{\yy}{\mathbf{y}}
\newcommand{\zz}{\mathbf{z}}
\newcommand{\ww}{\mathbf{w}}
\newcommand{\ff}{\mathbf{f}}
\newcommand{\cc}{\mathbf{c}}
\newcommand{\dd}{\mathbf{d}}
\newcommand{\hh}{\mathbf{h}}
\newcommand{\ttt}{\mathbf{t}}
\renewcommand{\aa}{\mathbf{a}}
\newcommand{\bK}{\mathbf{K}}
\newcommand{\bB}{\mathbf{B}}
\newcommand{\ZZ}{\mathbb{Z}}
\newcommand{\Zpol}{\ZZ[X_1,\ldots,X_n]}
\newcommand{\FF}{\mathbb{F}}
\newcommand{\RR}{\mathbb{R}}
\newcommand{\NN}{\mathbb{N}}
\newcommand{\CC}{\mathbb{C}}
\renewcommand{\AA}{\mathbb{A}}
\newcommand{\PP}{\mathbb{P}}
\newcommand{\GG}{\mathbb{G}}
\newcommand{\QQ}{\mathbb{Q}}
\newcommand{\sB}{\mathsf{B}}
\newcommand{\cP}{\mathcal{P}}
\newcommand{\cR}{\mathcal{R}}
\newcommand{\cB}{\mathcal{B}}
\newcommand{\cE}{\mathcal{E}}
\newcommand{\cC}{\mathcal{C}}
\newcommand{\cN}{\mathcal{N}}
\newcommand{\cM}{\mathcal{M}}
\newcommand{\cL}{\mathcal{L}}
\newcommand{\cD}{\mathcal{D}}
\newcommand{\cA}{\mathcal{A}}
\newcommand{\cK}{\mathcal{K}}
\newcommand{\cF}{\mathcal{F}}
\newcommand{\cZ}{\mathcal{Z}}
\newcommand{\cX}{\mathcal{X}}
\newcommand{\cG}{\mathcal{G}}
\newcommand{\cH}{\mathcal{H}}
\newcommand{\cI}{\mathcal{I}}
\newcommand{\cO}{\mathcal{O}}
\newcommand{\cW}{\mathcal{W}}
\newcommand{\ud}{\mathrm{ud}}
\newcommand{\Zar}{\mathrm{Zar}}
\newcommand{\fm}{\mathfrak{m}}
\newcommand{\fp}{\mathfrak{p}}
\newcommand{\smod}[1]{\,(#1)}
\newcommand{\spmod}[1]{\,(\mathrm{mod}\,{#1})}
\newcommand{\bm}{\mathbf}

\renewcommand{\bf}{\mathbf}
\newcommand{\f}{\mathbf{f}}
\newcommand{\g}{\mathbf{g}}
\newcommand{\F}{\mathbf{F}}
\newcommand{\D}{\mathcal{D}}
\newcommand{\G}{\mathbf{G}}
\newcommand{\p}{\mathbf{p}}
\newcommand{\q}{\mathbf{q}}
\renewcommand{\d}{\mathbf{d}}
\newcommand{\Q}{\mathcal{Q}}
\newcommand{\llambda}{\boldsymbol\lambda}
\newcommand{\LLambda}{\boldsymbol\Lambda}

\newtheorem{thm}{Theorem}[section]
\newtheorem{lemma}[thm]{Lemma}
\newtheorem*{lemma*}{Lemma}
\newtheorem{prop}[thm]{Proposition}
\newtheorem*{prop*}{Proposition}
\newtheorem*{thm*}{Theorem}
\newtheorem{claim}[thm]{Claim}
\newtheorem{cor}[thm]{Corollary}
\newtheorem*{conj*}{Conjecture}
\newtheorem{conj}{Conjecture}
\newtheorem*{sats*}{Sats}
\theoremstyle{remark}
\newtheorem*{note*}{Note}
\newtheorem{note}{Note}
\newtheorem*{rem*}{Remark}
\newtheorem{rem}[thm]{Remark}
\newtheorem{example}[thm]{Example}
\newtheorem*{acknowledgement*}{Acknowledgement}
\newtheorem*{question*}{Question}
\newtheorem*{answer*}{Answer}
\theoremstyle{definition}
\newtheorem*{def*}{Definition}
\newtheorem{notation}[thm]{Notation}
\newtheorem*{notation*}{Notation}

\maketitle

\section{Introduction}

Let $f_1,\dotsc, f_r \in \ZZ[x_1,\dotsc,x_n]$ be polynomials of degree $d_1,\dotsc,d_r$, respectively, and let $\bf f$ denote the $r$-tuple of polynomials $(f_1,\dotsc,f_r)$. We are interested in upper bounds for the counting function
\[
N(\bf f,B) := \#\{\xx \in \ZZ^n ; f_1(\xx)=\dotsb=f_r(\xx)=0, \vert \xx \vert \leq B\}.
\]
(Here, and throughout the paper, $|\cdot|$ denotes the maximum norm $|\xx| = \max\{|x_1|,\dotsc,|x_n|\}$.) If we assume that the polynomials $f_i$ define a complete intersection in $\AA^n$ of dimension $n-r \geq 0$, then we have the well-known upper bound $N(\f,B) \ll B^{n-r}$, which we shall refer to as the trivial bound (\emph{cf.} Lemma \ref{lem:trivial} below). Heuristic arguments suggest a bound $N(\f,B) \ll B^{n-\D}$, where
\[
\D := \sum_{i=1}^r d_i,
\]
at least as soon as $n > \D$. In the special case where the polynomials $f_i$ are homogeneous of the same degree $d$, a famous result by Birch establishes the heuristic upper bound, and indeed an asymptotic formula, as soon as
\begin{equation}
\label{eq:birch}
n > s^* + 2^{d-1}(d-1)r(r+1).
\end{equation}
Here,  $s^* = s^*_\f$ is the dimension of the so-called Birch singular locus: the affine variety
\[
\{\xx \in \AA^n \mid \rank J(\xx) < r\},
\]
where $J(\xx)$ is the Jacobian matrix of size $r\times n$ with rows formed by the gradient vectors $\nabla f_i(\xx)$. (See also recent work by Dietmann \cite{Dietmann15} and, independently, Schindler \cite{Schindler15}, where $s^*$ is replaced by an alternative quantity, sometimes leading to a stronger result.)  Birch's results have recently been extended to forms of differing degree by Browning and Heath-Brown \cite{Browning-Heath-Brown14}.

Seeing as Birch's theorem, like most results proven with the Hardy-Little\-wood circle method, requires the number of variables to be rather large, one may ask if more modest upper bounds are still available for smaller values of $n$. At the far end of the spectrum, the \emph{dimension growth conjecture} of Heath-Brown and Serre leads us to expect the bound $N(\f,B) \ll B^{n-\rho-1+\epsi}$ for an $r$-tuple of homogeneous polynomials $\f$ defining an irreducible non-linear variety of codimension $\rho \leq n-2$ in $\PP^{n-1}$. The determinant method has proved a useful tool in approaching this conjecture, and it has now been established in many cases, see \cite[\S 3]{Browning09} for an overview. A full proof has recently been announced by Salberger.    

We shall allow ourselves to call an $r$-tuple of polynomials $\f$ as above a \emph{system} of polynomials, and $\d:=(d_1,\dotsc,d_r)$ its \emph{multidegree}. For each polynomial $f_i$, we denote its leading form (homogeneous part of degree $d_i$) by $F_i$. The $r$-tuple $\F:=(F_1,\dotsc,F_r)$ will then be called the \emph{system of leading forms} of $\f$. We associate to $\f$ the projective variety $Z_\f \subset \PP^{n-1}_\QQ$ defined by the leading forms $F_1,\dotsc,F_r$. By a slight abuse of notation, we write $\langle \f \rangle := \langle f_1,\dotsc,f_r \rangle$ for the ideal of $\ZZ[x_1,\dotsc,x_n]$ generated by the polynomials $f_i$. Finally, we define the \emph{height} $\Vert \f \Vert$ to be the maximal absolute value of any coefficient appearing in one of the polynomials $f_i$ in the system.  

The results in this paper, which we are now ready to state, occupy a middle ground between the two types of bounds discussed above. 
We shall first state a simplified version of our main result.

\begin{thm}
\label{thm:same}
Suppose that $d_i \geq d \geq 4$ for all $i$ and that $Z_\ff$ is non-singular of codimension $r$. Put 
\[
\eta_{n,r,d} := \frac{2^{d-2}(d-1)r}{n+2^{d-2}(d-1)r-1}.                                                                                                
\]
Then the estimate
\[
N(\ff,B) \ll_{n,\d} B^{n-rd\left(1-\eta_{n,r,d}\right)} \log(\Vert \F \Vert)^{rd}
\]
holds as soon as $n > 2^{d-2}(d-1)r$.
\end{thm}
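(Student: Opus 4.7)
The plan is to combine a Weyl-type exponential sum bound (in the spirit of Birch) with a complementary estimate, balanced through a free parameter. Writing
\[
N(\ff,B) = \int_{[0,1]^r} S(\xxi)\, d\xxi, \qquad S(\xxi) := \sum_{|\xx| \leq B} e(\xxi \cdot \ff(\xx)),
\]
possibly with a smooth weight, the central ingredient is a Weyl estimate for $|S(\xxi)|$. I would apply $d-2$ rounds of Weyl differencing to $|S(\xxi)|^{2^{d-2}}$ and close with a Cauchy--Schwarz step; the resulting multilinear form attached to $\F$ has maximal rank because $Z_\ff$ is smooth, yielding a bound of the shape
\[
|S(\xxi)|^{2^{d-2}} \ll B^{(2^{d-2}-1)n + \epsi} \cdot M(\xxi, B),
\]
where $M(\xxi,B)$ counts integer vectors in a box on which certain linear forms in $\xxi$ are nearly integral. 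The quantity $2^{d-2}(d-1)r$ appearing in $\eta_{n,r,d}$ reflects the dimension loss accumulated over $r$ forms and $d-2$ differencing steps.

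With the Weyl bound in hand I would introduce a parameter $Q$ and partition $[0,1]^r$ into major arcs $\mathfrak{M}(Q)$ centered at rationals $\aa/q$ with $q \leq Q$, and their minor-arc complement $\mathfrak{m}(Q)$. On $\mathfrak{m}(Q)$, integrating the Weyl bound yields a contribution of the form $Q^{2^{d-2}(d-1)r}\cdot B^{n-rd+\epsi}$. On $\mathfrak{M}(Q)$, a careful analysis --- expanding $S(\xxi)$ about each rational and invoking classical estimates for the complete exponential sums modulo $q$ attached to $\F$ (again leveraging the non-singularity of $Z_\ff$) --- produces a contribution of the form $B^{n+\epsi}/Q^{n-1}$. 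Choosing
\[
Q \asymp B^{rd/(n-1+2^{d-2}(d-1)r)}
\]
balances the two, and the resulting exponent of $B$ is exactly
\[
n - \frac{rd(n-1)}{n-1+2^{d-2}(d-1)r} = n - rd(1 - \eta_{n,r,d}),
\]
as stated. The factor $\log(\|\F\|)^{rd}$ arises from the dyadic decompositions used to organize the various scales in $\xxi$ during the minor-arc estimate, with one $\log$ per degree of freedom in the system of forms.

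The principal obstacle is the Weyl estimate itself under the weakened condition $n > 2^{d-2}(d-1)r$, which is roughly a factor of $2(r+1)$ better than what Birch's original proof gives. Achieving this requires two simultaneous refinements: a factor of $r+1$, by exploiting the non-singularity of the single projective variety $Z_\ff$ rather than analyzing each form separately (in the spirit of the Dietmann--Schindler sharpenings cited in the introduction); and a factor of $2$, by performing only $d-2$ rounds of differencing and relying on a Cauchy--Schwarz step at the end, as in more recent adaptations of Weyl's inequality. Coordinating both improvements while retaining uniformity in the coefficients of $\ff$ --- in order that only a polynomial-in-$\log\|\F\|$ dependence survives --- is the central technical challenge; once this is in place, the major/minor arc optimization is comparatively mechanical, and the condition $d \geq 4$ enters only to ensure that the $d-2$ differencing steps are non-trivial.
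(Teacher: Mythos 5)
Your proposal takes the classical circle-method route — Weyl differencing of the exponential sum $S(\xxi)$ followed by a major/minor arc decomposition — which is genuinely different from the paper's method, and unfortunately the key step in your plan is precisely the step you cannot carry out. You correctly identify that the whole content of the theorem would reside in a Weyl-type bound valid already for $n > 2^{d-2}(d-1)r$, a saving of a factor of roughly $2(r+1)$ over Birch, and you then label this as ``the principal obstacle'' without resolving it. That is not a proof: it is a statement of the problem. No amount of ``exploiting the non-singularity of $Z_\ff$'' or ``a Cauchy--Schwarz step at the end'' is known to produce such a saving in the archimedean Weyl setting, and indeed if it did, one would expect to get an asymptotic formula (as Birch does), not merely an upper bound. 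Moreover, your major-arc estimate $B^{n+\epsi}/Q^{n-1}$ is asserted rather than justified, and in the relevant range of $n$ the major-arc contribution is not easy to control by that amount.

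The paper avoids all of this by not using the circle method at all. Instead it replaces the Diophantine system by congruences modulo carefully chosen products of primes $q_d = \prod p_i$ with $p_i \asymp \xi$, and performs an iterated $q$-analogue of van der Corput differencing (Heath-Brown's method) directly on the congruence-counting function $N_W(\f,B,\q)$: one splits into residue classes modulo $p_m$, applies Cauchy--Schwarz to the resulting variance, and reorganizes the off-diagonal terms as a new congruence count $N_{W_\yy}(\tilde\f^\yy,B,\tilde\q)$ for the differenced polynomials $f_i^\yy(\xx)=f_i(\xx+p_m\yy)-f_i(\xx)$ modulo the smaller modulus $p_m^{-1}q_d$. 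The point of working with congruences rather than exponential sums is that the inner complete sum modulo each prime is controlled by Deligne--Hooley square-root cancellation over finite fields, which is much stronger than any archimedean Weyl bound in this range of $n$. The dimension of the singular loci of the differenced systems is controlled by Lemma~\ref{lem:HBlemma2}, and the induction is over the number of differencing steps $m$ and the singular-locus dimension $s$.

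Two further inaccuracies in your sketch are worth flagging. First, the factor $\log(\Vert\F\Vert)^{rd}$ does not arise from dyadic decompositions in $\xxi$; it comes from the requirement $\xi \gg \log\Vert\F\Vert$ in Lemma~\ref{lem:chooseprimes} (one needs primes of size $\asymp\xi$ that avoid the bad reductions of $\f$, whose product divides an integer of size $\Vert\F\Vert^{O(1)}$), together with the use of the trivial bound $N(\f,B)\ll B^{n-r}$ when $B$ is smaller than a power of $\log\Vert\F\Vert$. Second, the condition $d\geq 4$ is not merely to make the $d-2$ differencing steps ``non-trivial''; it is needed so that the third error term $B^n\xi^{-r/2}(\xi/B)^{n/2}$ arising from degenerate $\yy$ in the proof of Proposition~\ref{prop:heart} is dominated by the others at the optimal choice of $\xi$.
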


\begin{rem*}
For any ideal $I \subset \ZZ[x_1,\dotsc,x_n]$, one may define the \emph{ideal of leading forms} $I'$, consisting of the leading forms of all elements of $I$. If $I = \langle \f \rangle$ as above, it may not always be the case that $I' = \langle \F \rangle$. However it is well-known that if the leading forms $F_i$ cut out a subscheme of codimension $r$, then they do indeed generate the ideal $I'$ of leading forms. 
\end{rem*}

\begin{rem*}
In the excluded case where $\min\{d_i\} =2$ or $3$, the estimate in Theorem \ref{thm:same} may be replaced by results of Luo \cite[Thm. 2]{Luo} or the author \cite[Thm 1.1]{Marmon08}, respectively. 
\end{rem*}

In the case where the polynomials are truly of different degree, one can do better than Theorem \ref{thm:same}. In order to state this more general result, we shall group the polynomials according to their degree. Thus, for each $2\leq d \leq D$, where $D = \max_i\{d_i\}$, let $f_{d,1},\dotsc,f_{d,r_d}$ be an enumeration of the polynomials of degree $d$ among the $f_i$. 

\begin{thm}
\label{thm:different}
Suppose that $D \geq 4$ and that $Z_\ff$ is non-singular of codimension $r$. Put
\begin{equation*}
\begin{split}
\D'&:= \sum_{d=2}^{D-1} (d-1) r_d + D r_D, \\
\Delta &:=\sum_{d=2}^{D-1} (d-2+2^{-d+1})r_d + (D-1)r_D \\
\text{and}\quad 
\eta&:= \frac{2^{D-2}\Delta}{n+2^{D-2} \Delta -1}.
\end{split}
\end{equation*}
Then, provided
\(
n > 2^{D-2} \Delta, 
\)
we have the estimate
\begin{equation}
\label{eq:different}
N(\f,B) \ll_{n,\d} B^{n-\D'\left(1- \eta \right)} \log(\Vert \F \Vert)^\cD. 
\end{equation}
\end{thm}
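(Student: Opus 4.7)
The plan is to adapt the strategy behind Theorem~\ref{thm:same} to the setting of mixed degrees. The backbone of the argument combines a $p$-adic determinant-style decomposition with Weyl-type differencing, where the novelty lies in applying only $d-1$ (and not the uniform $D-1$) differencing steps to each polynomial of degree $d$, so as to preserve the sharp weight $\Delta$ rather than the cruder $(D-1)r$ that a naive treatment would produce.

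First I would fix an auxiliary prime $p$ of size $p \asymp B^\theta$, with $\theta$ to be optimized at the end, and decompose
\[
N(\ff,B) = \sum_{\uu} N(\ff,B;\uu),
\]
where the sum runs over residue classes $\uu \in (\ZZ/p\ZZ)^n$ with $\ff(\uu) \equiv \0 \pmod{p}$, and $N(\ff,B;\uu)$ counts the integer points in the box $|\xx| \leq B$ with $\xx \equiv \uu \spmod{p}$. The non-singularity of $Z_\ff$ ensures that, for all but finitely many primes $p$, the system $\ff \equiv \0 \spmod{p}$ has $O(p^{n-r})$ solutions, and the Jacobian is of full rank at each such $\uu$; this also lets one restrict attention to the smooth mod-$p$ locus.

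Next I would estimate each $N(\ff,B;\uu)$ by Weyl differencing. For a polynomial of degree $d$, one applies $d-1$ successive differencings to reduce it to a linear form in the differenced variables. For the top-degree polynomials I would use the full $D-1$ differencings, contributing the weight $(D-1)r_D$ to $\Delta$. For polynomials of lower degree $d<D$ I would instead use $d-2$ full differencings followed by a single Cauchy--Schwarz step, allowing variables to be shared among the lower-degree polynomials and producing the weight $(d-2+2^{-d+1})r_d$ in $\Delta$. Once all polynomials have been linearised, the remaining problem is a lattice-point count in a parallelepiped subject to linear congruences modulo $p$; this is treated by a standard geometry-of-numbers argument and accounts for the factor $\log(\Vert \F \Vert)^\cD$ in the final bound.

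Finally, I would optimize $\theta$, balancing the $O(p^{n-r})$ loss from the number of residue classes against the differencing saving within each class; this yields exactly $\eta = 2^{D-2}\Delta/(n+2^{D-2}\Delta-1)$ and the condition $n > 2^{D-2}\Delta$ for the argument to give a non-trivial bound. The main obstacle, and the step that truly distinguishes Theorem~\ref{thm:different} from Theorem~\ref{thm:same}, will be the bookkeeping of the differencing in the mixed-degree case: one must verify that the degree-$d$ polynomials genuinely contribute only the sharper weight $(d-2+2^{-d+1})$, which rests on the Cauchy--Schwarz step being applied at the correct moment and on the smoothness of $Z_\ff$ guaranteeing that the collection of differenced forms remains in sufficiently general position throughout the induction.
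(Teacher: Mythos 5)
Your proposal captures only the rough silhouette of the argument and leaves genuine gaps. The paper does not fix a single auxiliary prime $p\asymp B^\theta$; the heart of the proof (Proposition~\ref{prop:heart}) replaces each equation $f_{d,j}=0$ by a congruence modulo a tailored product $q_d=\prod_{i=0}^{\min\{m,d-2\}}p_{m-i}$ of \emph{distinct} primes of carefully differentiated sizes ($p_0\asymp\xi^2$ while $p_1,\dotsc,p_m\asymp\xi$). Each $p$-adic van der Corput step (Heath-Brown's $q$-analogue) strips exactly one prime off the moduli of the high-degree polynomials, so that after $i$ steps the degree-$d$ polynomials with $d<i+2$ have already dropped out. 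A single prime cannot support this iterative stripping, and classical Weyl differencing would push you back into the circle method, which the paper is explicitly avoiding.

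Two further ideas are essential and absent from your sketch. First, the non-singularity of $Z_\f$ is not used merely to ensure the Jacobian has full rank on the solution set mod $p$: it is the input to Hooley's extension of Deligne's theorem, giving the square-root error term $\#Z_{\f,p}(\FF_p)=p^{n-r-1}+O(p^{(n-r)/2})$, and the entire inductive structure (induction on the singular-locus dimension $s$ via hyperplane slicing, Lemma~\ref{lem:hyperplane}) is built around propagating this square-root cancellation. Second, the differenced system $\tilde\f^\yy$ need not remain a smooth complete intersection for all $\yy$, and the crucial bookkeeping is Lemma~\ref{lem:HBlemma2}: one must bound, for each $t$, the number of $\yy$ for which the singular locus of the differenced variety has dimension at least $t$. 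Your phrase that smoothness ``guarantees the differenced forms remain in general position'' is precisely the claim that needs a quantitative lemma, and without one the weight $\Delta$ cannot be justified. Finally, the nested non-singularity of all the subsystems $\hat\f_i$ is not automatic from the hypothesis on $Z_\f$; the paper first passes to an equivalent generating system via a Bertini-type argument (Lemma~\ref{lem:optimalsystem}), and that step is also where the $\log\Vert\F\Vert$ factor ultimately originates (one needs primes not dividing a certain resultant-like integer built from $\F$), not from a geometry-of-numbers endgame as you suggest.
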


To compare the admissible range of $n$ in the above theorems with \eqref{eq:birch}, we consider the singular locus, in the usual sense, of the projective variety $Z_\f$, which is the variety
\[
\Sing(Z_\f) = \{\xx \in \PP^{n-1}_\QQ \mid F_1(\xx) = \dotsb = F_r(\xx) = 0, \rank J(\xx) < r\}.
\]
Here, $J(\xx)$ is the Jacobian matrix with rows $\nabla F_i(\xx)$. Putting $s = \dim (\Sing(Z_\f))$, one has
\[
s+1 \leq s^* \leq s+1 +r,
\]
for $s^* = s^*_\F$. The non-singularity assumption in our theorems translates to the condition that $s=-1$, but it would be an easy matter to derive more general results, with $s+1$ playing a similar role as the quantity $s^*$ in \eqref{eq:birch}.

The proofs of Theorems \ref{thm:same} and \ref{thm:different} employ an iterated version of the multidimensional $q$-analogue of van der Corput differencing introduced by Heath-Brown in \cite{Heath-Brown}, and also builds upon previous work by the author \cite{Marmon08,Marmon10}.

\begin{rem}
\label{rem:improvement}
We observe that in the case where the polynomials are of equal degree, the exponent of $B$ in Theorem \ref{thm:same} may be written as $n-\mu \cD$, where $\mu \to 1$ as $n \to \infty$. Thus our bound approaches the heuristic one asymptotically in this sense. The fact that the same is not true, in general, of the exponent in Theorem \ref{thm:different}, indicates that our method is open to further improvement.
\end{rem}

We shall now describe the strategy behind the proof of Theorems \ref{thm:same} and  \ref{thm:different}. Let us first introduce some notation that will be used. 

\begin{notation*}
We define 
\[
\cZ_\f := \Proj \ZZ[x_1,\dotsc,x_n]/\langle \F \rangle \subset \PP^{n-1}_\ZZ,
\]
so that the variety $Z_\f$ featuring in our main results may be expressed as
\[
Z_\f = \cZ_\f \otimes_\ZZ \QQ. 
\]
For any prime $p$, reduction modulo $p$ gives rise to the projective variety
\[
Z_{\f,p} := \cZ_\f \otimes_{\ZZ} \FF_p = \Proj \ZZ[x_1,\dotsc,x_n]/\langle p,F_1,\dotsc,F_r \rangle \subset \PP^{n-1}_{\FF_p}.
\]
Writing $Z_{\f,\infty} := Z_\f$ and $\FF_\infty := \QQ$ for notational convenience, we now define
\[
\rho_v(\f) := \operatorname{codim}(Z_{\f,v},\PP^{n-1}_{\FF_v}) \quad \text{and} \quad 
s_v(\f) := \dim (\Sing Z_{\f,v}), 
\]
where either $v=p$ for a prime $p$, or $v=\infty$. The parameters $\rho_v(\f)$ and $s_v(\f)$ will appear frequently in our arguments. They clearly satisfy the inequalities 
\[
0 \leq \rho_v(\f) \leq r, \qquad 
-1 \leq s_v(\f) \leq n - 1- \rho_v(\f). 
\]
It is also clear that the inequality $\rho_p(\f) \leq \rho_\infty(\f)$ holds for all $p$, and that $\rho_p(\f) = \rho_\infty(\f)$ for $p \gg_\f 1$. Conversely, we have $s_p(\f) \geq s_\infty(\f)$, with equality for $p \gg_\f 1$.
\end{notation*}

Our approach for estimating $N(\f,B)$ involves replacing the system of equations $f_{d,1}(\xx) = \dotsb = f_{d,r_d}(\xx) = 0$, for each $2 \leq d \leq D$, with a system of congruences
\begin{equation}
\label{eq:congruences}
f_{d,1}(\xx) \equiv \dotsb \equiv f_{d,r_d}(\xx) \equiv 0 \pmod{q_d}
\end{equation}
for suitably chosen integers $q_2,\dotsc,q_D$. More precisely, let $m$ be an integer between $0$ and $D-2$. Then, given a parameter $\xi \in [B^{1/2},B]$,  and a collection of pairwise different primes $p_0,\dotsc,p_m$, satisfying
\begin{gather}
\label{eq:primes_size}
p_1 \asymp \dotsb \asymp p_{m} \asymp \xi, \quad p_0 \asymp \xi^2 \text{ and} \\
\label{eq:primes_gcd}
(p_0 p_1 \dotsb p_m,D!) = 1,
\end{gather}
we take
\begin{equation}
q_d := \prod_{i=0}^{\min\{m,d-2\}} p_{m-i}.
\end{equation}
\begin{rem*}
The natural number $m$ represents the number of differencing steps that will be performed on the polynomials of highest degree $D$. Each congruence to the modulus $q_d$ will correspond, after one step in our van der Corput differencing process, to a family of congruences of degree $d-1$ to the smaller modulus $p_m^{-1} q_d$. The numbers $q_d$ are chosen in such a way that we only ever need to consider congruences involving polynomials of degree at least $2$ (discarding, as we may, all congruences to the modulus $1$).  
\end{rem*}

To encode the above congruence relations, we now put
$$\q_d:=(q_d,\dotsc,q_d) \in \NN^{r_d}$$
for $2 \leq d \leq D$ and let $\q \in \NN^r$ be the vector defined by
\begin{equation}
\q = (q_2,\dotsc,q_2,\dotsc,q_D,\dotsc,q_D)= (\q_2,\dotsc,\q_D).
\end{equation}

Now, for a suitably chosen smooth weight function $W \in C_0^\infty(\RR^n)$, we certainly have
\begin{equation}
\label{eq:N(f,B,q)}
N(\f,B) \leq \sum_{\substack{\xx \in \ZZ^n\cap [-B,B]^n\\ \q \mid \f(\xx)}} 1 \ll \sum_{\substack{\xx \in \ZZ^n\\ \q \mid \f(\xx)}} W(B^{-1}\xx) =:N_W(\f,B,\q),
\end{equation}
where the condition $\q \mid \f(\xx)$ should be interpreted to mean that the congruence systems \eqref{eq:congruences} are simultaneously satisfied for $2 \leq d \leq D$. 

In estimating the quantity $N_W(\f,B,\q)$, we shall group the polynomials $f_i$ into blocks in two different ways --- we put
\[
\f_d = (f_{d,1},\dotsc,f_{d,r_d}) \in \ZZ[\xx]^{r_d}
\]
for each $2 \leq d \leq D$, and furthermore
\[
\hat \f_i = (\f_{i+2},\f_{i+3},\dotsc,\f_{D})
\in \ZZ[\xx]^{r_{i+2}+\dotsb + r_D}
\]
for each $0 \leq i \leq m$. (Thus, each system $\hat \f_i$ is composed of all polynomials $f_{d,j}$ for which $p_i \mid q_d$, so that they will be subject to at least $i$ differencing steps. In particular, $\hat \f_0 = \f$ and $\hat \f_m = (\f_{m+2},\dotsc,\f_D)$.) 

We are now ready to formulate the main technical result of the paper, giving an asymptotic formula for a weighted counting function $N_W(\f,B,\q)$. The class of smooth weight functions $\cC_n(R,(\kappa_j))$ occurring in the statement will be defined in Section \ref{sec:preliminary}. 

\begin{prop}
\label{prop:heart}
Let $\xi \in [B^{1/2},B]$ and suppose that $p_0,\dotsc,p_m$ are primes satisfying \eqref{eq:primes_size} and \eqref{eq:primes_gcd}. Suppose that $\rho_{p_i}(\f) = r$ for all $0 \leq i \leq m$, and put
\[
s:= \max_{0 \leq i \leq m} s_{p_i}(\hat \f_i). 
\]
Then, for any smooth weight function $W \in \cC_n(R,(\kappa_j))$, the asymptotic formula
\begin{equation}
\label{eq:asymptotic'}
\begin{split}
N_W(\f,B,\q) - \frac{\cB}{q_2^{r_2}q_3^{r_3}\dotsb q_D^{r_D}}  &\ll B^n \xi^{-\cR} \left(\frac{\xi}{B}\right)^{(n-s-2)/2^m} \\
&\hphantom{\ll} + B^n \xi^{-r/2} \left(\frac{\xi}{B} \right)^{(n-s-1)/2} 
\end{split}
\end{equation}
holds, where
\[
\cR = \sum_{i=2}^{m+1} (1-2^{-i+1})r_{i} + \sum_{i = m+2}^D r_i
\]
and
\[
\cB = \cB_W :=  \sum_{\xx \in \ZZ^n} W(B^{-1}\xx).
\]
The implied constant in \eqref{eq:asymptotic'} depends at most on the data 
\[
n, \d, m, R \text{ and }(\kappa_j).
\]
\end{prop}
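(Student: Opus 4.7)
The proof proceeds by induction on $m$, combining Heath-Brown's $q$-analogue of van der Corput differencing (one step per level of induction) with Deligne-type square-root cancellation for complete exponential sums over non-singular complete intersections modulo $p_0$ as the base case.

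\emph{Base case ($m=0$).} Here $\q=(p_0,\dotsc,p_0)$ with $p_0\asymp\xi^2$. Expanding the congruence via additive characters,
\[
N_W(\f,B,\q)=p_0^{-r}\sum_{\aa\in(\ZZ/p_0)^r}\sum_{\xx}W(B^{-1}\xx)\,e_{p_0}\!\bigl(\aa\cdot\f(\xx)\bigr),
\]
the contribution of $\aa=\0$ is the main term $\cB/p_0^r$. For the remaining $\aa$, apply Poisson summation in $\xx$; this converts the inner sum into a weighted combination of complete exponential sums $S(\aa;\cc)=\sum_{\xx\in\FF_{p_0}^n}e_{p_0}(\aa\cdot\f(\xx)+\cc\cdot\xx)$ against the rapidly decreasing $\widehat W$. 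The hypotheses $\rho_{p_0}(\f)=r$ and $s_{p_0}(\f)\leq s$ then supply a bound of the shape $|S(\aa;\cc)|\ll p_0^{(n+s+1)/2}$ via \'etale cohomology, and summing over the dual lattice reproduces the second error term of \eqref{eq:asymptotic'}.

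\emph{Inductive step.} Assume the result for $m-1$ and perform one van der Corput shift by $p_m$. Because $W$ is smooth on scale $B$, averaging over $\yy$ in a box of side $H$ with $p_mH\asymp B$ perturbs $N_W(\f,B,\q)$ by an acceptable amount. Cauchy-Schwarz in $\xx$, followed by the change of variable $\zz=\yy-\yy'$, yields an inequality of the form
\[
\Bigl|N_W(\f,B,\q)-\tfrac{\cB}{\prod_d q_d^{r_d}}\Bigr|^2 \;\ll\; \frac{\cB}{H^n}\sum_{\zz}\Bigl|N_W\bigl(\tilde\f_\zz,B,\tilde\q\bigr) - (\text{main term})\Bigr|,
\]
where the differenced system $\tilde\f_\zz$ replaces each $f_{d,j}$ by $p_m^{-1}\bigl(f_{d,j}(\xx+p_m\zz)-f_{d,j}(\xx)\bigr)$, a polynomial in $\xx$ of degree $d-1$, subject to the congruence modulo $\tilde q_d:=q_d/p_m$. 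Polynomials originally of degree $2$ collapse to modulus-$1$ conditions and are discarded. Inspecting \eqref{eq:primes_size}, the modulus pattern $(\tilde q_d)$ is exactly the one produced by the recipe with parameter $m-1$, primes $p_0,\dotsc,p_{m-1}$, and degree-shifted system. Applying the inductive hypothesis to the inner count, and extracting the diagonal contribution $\zz=\0$ (which rebuilds the main term), produces the first error term of \eqref{eq:asymptotic'}; the exponent $1/2^m$ arises as the iterated square root over the $m$ differencing rounds.

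\emph{Principal obstacle.} The serious work lies in the singularity bookkeeping. At each differencing step I must verify that, for $\zz$ outside a negligible exceptional set, the reduced system $\tilde\f_\zz$ still defines a codimension-$r$ complete intersection modulo each relevant $p_i$, with singular locus of dimension at most $\max_{0\leq i\leq m} s_{p_i}(\hat\f_i)$ --- so that the inductive hypothesis can be applied with the same $s$ rather than a degraded value. This requires a Jacobian comparison linking the singularities of $\tilde\f_\zz$ to those of the $\hat\f_i$ and of $\f$ itself, combined with trivial bounds to absorb exceptional $\zz$. Once this geometric control is secured, the exponents $\cR$ and $r/2$ in \eqref{eq:asymptotic'} fall out of a routine telescoping of Cauchy-Schwarz losses, and the crucial inductive invariant --- that \emph{both} error terms are preserved rather than degraded at each step --- is maintained.
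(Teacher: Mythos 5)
Your overall architecture is right: induction on $m$, one $p_m$-van der Corput step per level, Deligne--Hooley square-root cancellation in the base case, and the recognition that the dangerous issue is controlling singularities of the differenced system. But there is a genuine gap in the part you flag as the ``principal obstacle,'' and the way you sketch around it does not match how the argument actually has to go.

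The problem is that the geometric input (Lemma~\ref{lem:HBlemma2} in the paper) which controls the locus $T_t$ of bad shift vectors $\yy$ (those for which the differenced system $\tilde\f^\yy$ has $\dim\Sing \geq t$) is only available under the hypothesis $s_{p_i}(\hat\f_i)=-1$, i.e.\ $s=-1$. You cannot ``apply the inductive hypothesis with the same $s$'' as you propose: after one differencing step the singular-locus dimension $s(\yy)$ of the new system ranges over $-1,\dotsc,n-1$ depending on $\yy$, and the workable strategy is to weight each stratum $\{\yy:s(\yy)=t\}$ by its cardinality, which is exactly what Lemma~\ref{lem:HBlemma2} provides --- but only under $s=-1$. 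So the case $s\geq 0$ needs a separate mechanism entirely, and your proposal has none. The paper handles it by a second, nested induction on $s$: a Bertini-type hyperplane-slicing lemma (Lemma~\ref{lem:hyperplane}) produces a primitive $\aa\ll 1$ with $\rho_{p_i}(\f|_\aa)=r+1$ and $s_{p_i}(\hat\f_i|_\aa)=s-1$ simultaneously for all $p_i$; writing $V_W(\f,B,\q)=\sum_{b\ll B}V_{\tilde W_{b/B}}(\g_b,B,\q)$ over the fibres of $\aa\cdot\xx=b$ then reduces to $(n-1,s-1)$. Without this double-induction structure the proposition simply isn't proved for $s\geq 0$.

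A second, smaller issue: even in the case $s=-1$, your ``absorb exceptional $\zz$ by trivial bounds'' is doing a lot of unstated work. The paper must also separate the $\yy$ with $\rho(\yy)<\tilde r$ (where the differenced system is not a complete intersection, so the induction hypothesis is inapplicable), show via \cite[Lemma~4.2]{Marmon08} that such $\yy$ lie in a low-dimensional variety $T_{n-\rho-1}$, and estimate their contribution by $B^n\sum_\rho (B/\xi^2)^\rho$. This is precisely what produces the second error term $B^n\xi^{-r/2}(\xi/B)^{(n-s-1)/2}$; your outline presents both error terms as falling out of ``routine telescoping of Cauchy--Schwarz losses,'' but they have quite different origins. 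I would also note that your normalization $p_m^{-1}(f(\xx+p_m\zz)-f(\xx))$ differs from the paper's $f(\xx+p_m\yy)-f(\xx)$; the paper keeps the factor $p_m$ in the leading form $p_m\,\yy\cdot\nabla F_i$, and the moduli are $\tilde q_d = q_d/p_m$ on the unscaled difference. This is cosmetic, but worth keeping straight to avoid losing track of powers of $p_m$ in the bookkeeping.
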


The case $m=0$ in Proposition \ref{prop:heart} gives an asymptotic formula for the weighted number of solutions of height at most $B$ to a system of congruences modulo a single prime $p_0 \in [B,B^2]$, with an error term depending on the dimension of the singular locus of the corresponding variety. Such a result appeared as Theorem 3.3 in \cite{Marmon08}, Heath-Brown having treated the case $r=1$ in \cite{Heath-Brown}. It may be viewed as an extension of Hooley's estimate \cite{Hooley} for the number of points on a complete intersection over a finite field, which in turn generalized Deligne's estimate \cite[Thm 8.1]{Deligne_WeilI}.

The case $m=1$, where a single differencing step is performed, was treated in \cite{Marmon08}. To deduce Theorem \ref{thm:same} from Proposition \ref{prop:heart}, we shall take $m = d-2 \geq 2$, thus performing the same number of differencings on each of the polynomials. For Theorem \ref{thm:different}, we instead take $m = D-2$, performing the maximum number of differencings on the polynomials of highest degree, and gradually fewer for those of lower degree.

After collecting the necessary tools in Section \ref{sec:preliminary}, we devote Section \ref{sec:proof_heart} to the proof of Proposition \ref{prop:heart}. The deduction of Theorems \ref{thm:same} and \ref{thm:different} is carried out in Section \ref{sec:proof_main}. 
\section{Preliminary considerations}
\label{sec:preliminary}

The differencing process that will be used in Section \ref{sec:proof_heart} gives rise to new polynomials
\[
f_i^\yy(\xx) = f_i(\xx+p_m \yy) - f_i(\xx).
\]
If $\yy \neq \0$, the polynomial $f_i^\yy$ has degree $d_i -1$, and its leading form is
\[
F_{i}^{\mathbf{y}}(\xx) = p_m\mathbf{y}\cdot\nabla F_{i}(\xx).
\] 
To apply Proposition \ref{prop:heart} iteratively, we need to control the dimension of singular loci of varieties defined by collections of such forms. The following result, which appeared in previous work of the author \cite[Lemma 2.9]{Marmon08}, addresses that problem. However, that version contained two errors, which we now take the opportunity to correct:
\begin{itemize}
\item The condition $p \nmid d_i$ in the hypotheses needs to be strengthened to $p \nmid d_i(d_i-1)$ in order for the proof to be valid.
\item In the proof of item (i), the identity $\xx \cdot \nabla^2G_i(\xx) = \ldots$
is both erroneous and superfluous, and one gets a valid proof by simply ignoring this statement.
\end{itemize}

\begin{lemma}
\label{lem:HBlemma2}
Let $G_{1},\ldots,G_{r} \in  \mathbb{Z}[x_{1},\ldots,x_{n}]$ be homogeneous polynomials of degrees $d_{1},\ldots,d_{r}$, respectively,
let $p$ be a prime such that $p\nmid d_{i}(d_{i}-1)$ for all $i=1,\ldots,r$,
and suppose that $\rho_p(\G) = r$ and $s_p(\G) = -1$. Define closed subschemes $S \subseteq \mathbb{P}_{\mathbb{F}_{p}}^{n-1}\times\mathbb{P}_{\mathbb{F}_{p}}^{n-1}$ and $S_\yy \subseteq \mathbb{P}_{\mathbb{F}_{p}}^{n-1}$ for each $\yy \in \PP^{n-1}_{\FF_p}$ by setting
\begin{multline*}
S=\left\{ (\mathbf{x},\mathbf{y})\in\mathbb{P}_{\mathbb{F}_{p}}^{n-1}\times\mathbb{P}_{\mathbb{F}_{p}}^{n-1};\ \mathbf{y}\cdot\nabla G_{i}(\mathbf{x})=0,\ i=1,\ldots,r,\right.\\
\left.\mathrm{rank}\left(\mathbf{y}\cdot\nabla^{2}G_{i}(\mathbf{x})\right)_{1\leq i\leq r}<r\right\}
\end{multline*}
and
\begin{multline*}
S_{\mathbf{y}}=\left\{ \mathbf{x}\in\mathbb{P}_{\mathbb{F}_{p}}^{n-1};\ \mathbf{y}\cdot\nabla G_{i}(\mathbf{x})=0,\ i=1,\ldots,r,\right.\\
\left.\mathrm{rank}\left(\mathbf{y}\cdot\nabla^{2}G_{i}(\mathbf{x})\right)_{1\leq i\leq r}<r\right\}.
\end{multline*}
For $s=-1,0,1,\ldots,n-1,$ let $T_{s}=\left\{ \mathbf{y}\in\mathbb{P}_{\mathbb{F}_{p}}^{n-1};\ \dim S_{\mathbf{y}}\geq s\right\}.$ Then $T_s$ is also Zariski closed, and the following estimates hold:
\begin{itemize}
\item [(i)]\label{enu:dimS}
$\dim S\leq n-2$;
\item [(ii)]\label{enu:dimT_s}
$\dim T_{s}\leq n-s-2$;
\item [(iii)]
If $T_{s}^{(1)},T_{s}^{(2)},\ldots$ are the
irreducible components of $T_{s}$, then
\[
\sum_{j}\deg(T_{s}^{(j)})=O_{n,\d}(1).
\]
\end{itemize}
\end{lemma}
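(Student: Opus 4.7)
The three claims are proved in sequence: (i) is the core dimension bound on the incidence variety $S$, while (ii) and (iii) then follow from (i) by standard fiber-dimension and Bezout arguments.

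For (i), the plan is to use the projection $\pi_1 \colon S \to \PP^{n-1}_{\FF_p}$, $(\xx, \yy) \mapsto \xx$, and to stratify the target by the rank of the Jacobian $J(\xx) = (\nabla G_i(\xx))_{i=1}^r$. On the open stratum $Z_\G$, the hypothesis $s_p(\G) = -1$ forces $\rank J(\xx) = r$, so the condition $J(\xx) \yy = 0$ defines a projective linear subspace of dimension $n-1-r$, namely the projectivized tangent space to $Z_\G$ at $\xx$. The second Euler identity $\xx^T \nabla^2 G_i(\xx) = (d_i - 1)\nabla G_i(\xx)$---valid precisely because of the strengthened hypothesis $p \nmid d_i - 1$---then shows that the $r \times n$ matrix $A(\xx, \yy) := (\yy^T \nabla^2 G_i(\xx))_i$ annihilates $\xx$ on this subspace, so that it descends to an $r \times (n-1)$ matrix pencil $\bar A(\xx, \yy)$ on the quotient $T_\xx Z_\G / \langle \xx \rangle$. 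The rank-drop condition $\rank A < r$ becomes a rank-drop condition on $\bar A$, cutting the expected codimension $n - r$ and leaving a fiber of projective dimension at most $r - 1$; this gives a contribution of at most $(n-1-r)+(r-1) = n-2$ to $\dim S$. On the closed strata $\xx \notin Z_\G$, the first Euler identity $\xx \cdot \nabla G_i(\xx) = d_i G_i(\xx)$ (valid because $p \nmid d_i$) forces $\rank J(\xx) \geq 1$, and a case analysis by this rank keeps the combined base-plus-fiber dimension within $n - 2$.

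With (i) in hand, (ii) is immediate from the fiber-dimension theorem applied to $\pi_2 \colon S \to \PP^{n-1}_{\FF_p}$: since every fiber over $\yy \in T_s$ has dimension $\geq s$, one has $\dim T_s + s \leq \dim \pi_2^{-1}(T_s) \leq \dim S \leq n-2$. Closedness of $T_s$ follows from the upper-semicontinuity of fiber dimension for the projective morphism $\pi_2$. For (iii), one observes that $T_s$ is cut out by determinantal equations in $\yy$ whose generators have degree bounded in terms of $n$ and $\d$, obtained by elimination of $\xx$ from the defining equations of $S$; Bezout's theorem then yields $\sum_j \deg(T_s^{(j)}) = O_{n,\d}(1)$.

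The main obstacle is the implicit statement in (i) that the descended pencil $\bar A(\xx, \cdot)$ genuinely achieves the expected generic rank $r$ on $T_\xx Z_\G / \langle \xx \rangle$ for $\xx$ outside a sufficiently low-dimensional subvariety of $Z_\G$; without this, the fiber in (i) would be too large. This is precisely where the two errors in \cite{Marmon08} occurred: the spurious identity for $\xx \cdot \nabla^2 G_i(\xx)$ is simply discarded, and the strengthened hypothesis $p \nmid d_i - 1$ is exactly what makes the correct Euler relation usable in characteristic $p$.
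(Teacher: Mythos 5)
The paper itself does not prove Lemma~\ref{lem:HBlemma2}: it defers entirely to \cite[Lemma~2.9]{Marmon08} and merely records the two corrections needed there (strengthening $p\nmid d_i$ to $p\nmid d_i(d_i-1)$, and deleting an erroneous identity for $\xx\cdot\nabla^2 G_i(\xx)$). Your proposal reconstructs what is indeed the right family of ideas --- project $S$ to the $\xx$-coordinate, exploit the Euler relation $\nabla^2 G_i(\xx)\,\xx=(d_i-1)\nabla G_i(\xx)$ (whence the hypothesis $p\nmid d_i-1$), and bound base and fiber dimensions --- and your reading of where the two corrections enter is sensible. Parts (ii) and (iii) as you argue them (upper semicontinuity of fiber dimension for the projective $\pi_2$, then a Bézout/elimination bound) are also standard and fine modulo (i).

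The problem is that your argument for (i) is not actually a proof, and you say so yourself. The claim that the fiber of $\pi_1$ over $\xx\in Z_\G$ has projective dimension at most $r-1$ rests only on an ``expected codimension'' count; nothing rules out the pencil $\yy\mapsto\bar A(\xx,\yy)$ degenerating on a large subset of $Z_\G$, and that is exactly what the lemma has to exclude. Your count is also internally inconsistent: the fiber sits inside $L_\xx\cap R_\xx$ with $L_\xx=\{\yy:J(\xx)\yy=0\}$ of projective dimension $n-1-r$, and after descending to $r\times(n-1)$ matrices the rank-drop locus has expected codimension $n-r$, which would give expected fiber dimension $-1$ rather than $r-1$ --- so the figure $r-1$ is neither proved nor the natural generic prediction. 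The handling of the strata with $\xx\notin Z_\G$ is one sentence (``a case analysis by this rank keeps the combined base-plus-fiber dimension within $n-2$'') with no argument, and this is precisely where the hypotheses $\rho_p(\G)=r$ and $s_p(\G)=-1$ must do their work. As it stands, then, the key inequality $\dim S\le n-2$ is asserted, not established, and (ii)--(iii) inherit that gap.
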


\begin{rem}
\label{rem:degenerate}
In particular, it follows that the set of $\yy$ such that $\yy \cdot G_i(\xx)$ vanishes identically in $\xx$ for some $i$ is (projectively) empty. Indeed, in this case we would have $S_\yy = \PP^{n-1}$, and by (\ref{enu:dimT_s}), we have $T_{n-1}= \emptyset$. 
\end{rem}

In our arguments we shall use smooth weight functions, for which we introduce the following notation, partly following \cite{Heath-Brown96}.

\begin{def*}[Smooth weights]
For any function $W \in C_0^\infty(\RR^n)$, we define its radius $\Rad(W)$ to be the smallest real number $R$ such that $\operatorname{supp}(W) \subset [-R,R]^n$. Furthermore, we put
\[
\kappa_j(W) := \max_{j_1+\dotsb+j_n = j} \max_{\xx \in \RR^n} \left\vert \frac{\partial^j}{\partial x_1^{j_1}\dotsb \partial x_n^{j_n}}\,W(\xx)  \right\vert
\]
for any integer $j \geq 0$. In particular, $\kappa_0(W) = \max_\xx \vert W(\xx) \vert$. Using this notation, given a positive real number $R$ and a sequence of positive real numbers $(\kappa_j)_{j=0}^\infty$, we let $\cC_n(R,(\kappa_j))$ be the set of all functions $W \in C_0^\infty(\RR^n)$ such that 
\[
\Rad(W) \leq R \quad \text{and} \quad \kappa_j(W) \leq \kappa_j \text{ for all } j \in \ZZ_{\geq 0}.
\]
\end{def*}
In our next result, we list some elementary properties of the classes $\cC_n(R,(\kappa_j))$.

\begin{prop}
\label{prop:weights}
Suppose that $W \in \cC_n(R,(\kappa_j))$. Then the following properties hold:
\begin{itemize}
\item[(i)]
For any $\uu \in \RR^n$, the translation $\tau_\uu W$ defined by $\xx \mapsto W(\xx+\uu)$ satisfies $\tau_\uu W \in \cC_n(R+|\uu|,(\kappa_j))$.
\item[(ii)]
For any $x \in \RR$, then the restriction $r_x W:\RR^{n-1}\to \RR$ defined by $\xx \mapsto W(\xx,x)$ satisfies $r_x W \in \cC_{n-1}(R,(\kappa_j))$.
\item[(iii)]
If $W' \in \cC_n(R',(\kappa_j'))$, then $WW' \in \cC_n(R'',(\kappa_j''))$ where
\[
R'' = \min\{R,R'\} \quad \text{and} \quad \kappa_j'' \ll_n \sum_{j_1+j_2=j} \kappa_{j_1} \kappa'_{j_2}.
\]
\item[(iv)]
For any matrix $M=(m_{ij}) \in \operatorname{GL}_n(\RR)$, the composition $W \circ M$ satisfies $W \circ M \in \cC(R',(\kappa_j'))$, where
\[
R' \ll_{n,\Vert M^{-1}\Vert} R \quad \text{and} \quad \kappa_j' \ll_{n,\Vert M \Vert} \kappa_j,                                                                                                                                                                                                                                                                                 \]
with $\Vert M \Vert := \max |m_{ij}|$.
\end{itemize}
\end{prop}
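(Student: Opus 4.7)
The plan is to verify each of the four items by direct unwinding of the definition of $\cC_n(R,(\kappa_j))$; none of them requires any deeper ingredient than the Leibniz rule and the chain rule, so the whole proof is essentially bookkeeping. I would organise it in the order (i), (ii), (iii), (iv), since each subsequent item draws on the same simple techniques.

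For (i), observe that $\tau_\uu W(\xx) = 0$ unless $\xx + \uu \in [-R,R]^n$, and this containment forces $|\xx| \leq R + |\uu|$ (with $|\cdot|$ the maximum norm), giving the claimed bound on the radius. Since $\partial^\alpha (\tau_\uu W)(\xx) = (\partial^\alpha W)(\xx + \uu)$ for every multi-index $\alpha$, the seminorms $\kappa_j$ are literally unchanged. For (ii), $r_x W$ vanishes off $[-R,R]^{n-1}$, and for any multi-index $\alpha = (j_1,\dotsc,j_{n-1})$ with $j_1 + \dotsb + j_{n-1} = j$, one has $\partial^\alpha (r_x W)(\xx) = (\partial^\alpha W)(\xx, x)$ (where on the right $\partial^\alpha$ acts on the first $n-1$ variables), so $\kappa_j(r_x W) \leq \kappa_j(W)$.

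For (iii), the support of $WW'$ is contained in $\operatorname{supp}(W) \cap \operatorname{supp}(W') \subset [-\min\{R,R'\},\min\{R,R'\}]^n$, giving the radius bound. The Leibniz rule yields
\[
\partial^\alpha (WW') = \sum_{\beta + \gamma = \alpha} \binom{\alpha}{\beta} \,\partial^\beta W \cdot \partial^\gamma W'
\]
for each multi-index $\alpha$ of total degree $j$. Taking pointwise maxima and bounding the number of multi-indices of a given total degree by $O_n(1)$, one obtains the stated bound $\kappa_j'' \ll_n \sum_{j_1 + j_2 = j} \kappa_{j_1} \kappa_{j_2}'$.

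For (iv), if $(W \circ M)(\xx) \neq 0$ then $M\xx \in [-R,R]^n$, so $\xx = M^{-1}(M\xx)$ has maximum norm at most $n \Vert M^{-1}\Vert R$, proving the radius estimate. The chain rule, applied iteratively, expresses any $j$-th partial derivative of $W\circ M$ as a sum of at most $O_{n,j}(1)$ terms, each of which is a product of $j$ entries of $M$ with some $j$-th partial derivative of $W$ evaluated at $M\xx$. Taking absolute values gives $\kappa_j'(W \circ M) \leq C_{n,j} \Vert M \Vert^j \kappa_j(W)$, which is the claimed bound. The only point that requires any care is the combinatorial constant in (iv), which one simply absorbs into the implicit constant depending on $n$ (and, via the degree $j$, on the index of $\kappa_j$); apart from this there is no genuine obstacle.
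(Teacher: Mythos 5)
Your proposal is correct and takes essentially the same approach as the paper, which merely remarks that (i) and (ii) follow directly from the definitions while (iii) and (iv) follow from the Leibniz formula and the chain rule; you have simply carried out the obvious unwinding in full. Your parenthetical caveat about the implicit constant in (iv) depending on $j$ as well as on $n$ and $\Vert M\Vert$ is consistent with how the classes $\cC_n(R,(\kappa_j))$ are used later in the paper (the final constants are allowed to depend on the whole sequence $(\kappa_j)$), so this is not a gap.
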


\begin{proof}
The properties (i) and (ii) follow directly from the definition above, whereas (iii) and (iv) are easy consequences of the Leibniz formula and the chain rule, respectively.
\end{proof}

Our next result is a reformulation of \cite[Lemma 3.1]{Marmon10} in the notation introduced above. It is a consequence of the Poisson summation formula and plays a  key role in the proof of Proposition \ref{prop:heart}.

\begin{lemma}
\label{lem:poisson}
Let $W \in \cC(R,(\kappa_j))$, and let $a$ and $B$ be real numbers with $B\geq1$ and $1\leq a\leq B$. Then we have
\begin{multline*}
  \sum_{\xx\in\ZZ^n} W\left(\frac{1}{B}\xx\right) \sum_{\yy\in\ZZ^n} W\left(\frac{1}{B}(\xx+a\yy)\right) \\
= a^{-n}\left(\sum_{\xx\in\ZZ^n} W\left(\frac{1}{B}\xx\right)\right)^2 + O_{n,R,N}\left(\kappa_0 \kappa_N B^{2n-N}a^{-n+N}\right)\\
 + O_{n,R,N} \left(\kappa_N^2 B^{2(n-N)}a^{-n+N}\right)
 \end{multline*}
 for any $N \in \ZZ_{\geq 0}$.
\end{lemma}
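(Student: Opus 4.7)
My plan is to apply the Poisson summation formula, first to the inner sum over $\yy$, and then to use a further Poisson expansion to compare the resulting expression with the stated main term. Writing $F(\xx) := W(\xx/B)$, so that $\hat F(\xxi) = B^n \hat W(B\xxi)$, a change of variables in the Fourier integral yields
\[
\sum_{\yy \in \ZZ^n} F(\xx + a\yy) = a^{-n} \sum_{\mathbf{k} \in \ZZ^n} \hat F(\mathbf{k}/a)\, e^{2\pi i \mathbf{k} \cdot \xx / a}.
\]
Substituting into the double sum and separating the $\mathbf{k} = 0$ contribution gives an apparent main term $a^{-n} \hat F(0)\, S$, where $S := \sum_\xx F(\xx)$, plus an oscillatory tail indexed by $\mathbf{k} \neq 0$.

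To convert this to the stated main term $a^{-n} S^2$, I would apply Poisson summation once more to $S$ to obtain $S = \hat F(0) + \sum_{\eeta \neq 0} \hat F(\eeta)$. The crucial analytic input throughout is the standard rapid-decay bound
\[
|\hat W(\xxi)| \ll_{n,R,N} \kappa_N\, |\xxi|_\infty^{-N} \qquad (\xxi \neq 0),
\]
obtained by $N$-fold integration by parts in the coordinate with largest $|\xi_j|$, using compact support of radius $R$. Combined with the trivial estimate $|S| \ll \kappa_0 B^n$, summing $|\hat F(\eeta)|$ over $\eeta \neq 0$ (convergent for $N > n$) controls the discrepancy $a^{-n} S(\hat F(0) - S)$ within the first stated error term.

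For the oscillatory tail with $\mathbf{k} \neq 0$, I would apply Poisson one more time to the twisted sum, writing
\[
\sum_{\xx \in \ZZ^n} F(\xx)\, e^{2\pi i \mathbf{k} \cdot \xx / a} = \sum_{\eeta \in \ZZ^n} \hat F(\eeta - \mathbf{k}/a),
\]
then bounding both $\hat F(\mathbf{k}/a)$ and the dual sum via the same rapid-decay estimate. Since $|\mathbf{k}/a|_\infty \geq 1/a$ for $\mathbf{k} \neq 0$, and likewise the distance from $\mathbf{k}/a$ to its nearest integer lattice point in the dual sum is at least $1/a$ in sup-norm, a factor of $a^N$ enters through $|\xxi|_\infty^{-N}$. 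Summing over $\mathbf{k} \neq 0$ (again convergent for $N > n$) then produces the second stated error term.

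The principal obstacle is the careful bookkeeping of how the parameters $a$, $B$, $\kappa_j$ and the constants depending on $R$ and $n$ compound at each step. In particular, estimating $\sum_\eeta \hat F(\eeta - \mathbf{k}/a)$ uniformly in $\mathbf{k}$ requires splitting off the lattice point $\eeta$ closest to $\mathbf{k}/a$ — where $B|\eeta - \mathbf{k}/a|_\infty$ can be as small as $B/a$ — from the rest; here the hypothesis $a \leq B$ ensures $B/a \geq 1$, allowing the rapid-decay bound to be applied uniformly, while the other lattice points contribute a convergent tail handled by the $\kappa_N$ bound alone.
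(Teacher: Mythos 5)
The high-level strategy is right and consistent with what the paper cites from [Marmon10, Lemma 3.1]: Poisson on the inner $\yy$-sum, then Poisson again to compare $\hat{F}(0)$ with $S$ and to treat the twisted sums $S_{\bm k} := \sum_\xx F(\xx)e^{2\pi i \bm k\cdot\xx/a}$. Your handling of the $\bm k=\bm 0$ discrepancy $a^{-n}(\hat F(0)-S)S$ is fine. The problem is in the $\bm k\neq\bm 0$ tail, and it is a genuine gap.

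First, the claim that ``the distance from $\bm k/a$ to its nearest integer lattice point is at least $1/a$'' is simply false: $a$ need not be coprime to $\bm k$ (indeed $a$ need not even be an integer). If, say, $a=2$ and $\bm k=(2,0,\dots,0)$, then $\bm k/a$ \emph{is} a lattice point and the distance is $0$, so the rapid-decay bound $|\hat F(\xxi)|\ll \kappa_N B^{n-N}|\xxi|_\infty^{-N}$ cannot be applied to the closest term in the dual sum. Second, even if the distance estimate were correct, your plan would put a factor $a^N$ on \emph{each} of $|\hat F(\bm k/a)|$ and $\sum_\eeta|\hat F(\eeta-\bm k/a)|$, producing an overall $a^{-n+2N}$, whereas the lemma asserts $a^{-n+N}$; since $a\geq 1$, this is strictly weaker and does not prove the statement. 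The fix is to bound the single closest dual lattice point not by rapid decay but by the trivial sup-norm bound $|\hat F(\delta)|\leq \|\hat F\|_\infty \ll \kappa_0 (RB)^n$, while the remaining $\eeta$ satisfy $|\eeta-\bm k/a|_\infty\geq 1/2$ and contribute $\ll\kappa_N B^{n-N}$ for $N>n$. Then
\[
|S_{\bm k}| \ll_{n,R,N} \kappa_0 B^n + \kappa_N B^{n-N},
\]
uniformly in $\bm k$, and combined with $|\hat F(\bm k/a)| \ll \kappa_N B^{n-N}(a/|\bm k|_\infty)^N$ and the convergent sum $\sum_{\bm k\neq \bm 0}|\bm k|_\infty^{-N}$ one gets exactly
\[
a^{-n}\sum_{\bm k\neq\bm 0}|\hat F(\bm k/a)||S_{\bm k}| \ll a^{-n+N}\bigl(\kappa_0\kappa_N B^{2n-N}+\kappa_N^2 B^{2(n-N)}\bigr),
\]
which is both stated error terms with the correct power of $a$. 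With this correction your proof goes through.
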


We may view the above result as counting, with smooth weights, the number of points of height at most $B$ in the lattice $\{(\xx,\xx + a\yy) \mid \xx,\yy \in \ZZ^n\} \subseteq \RR^{2n}$. This lattice has determinant $a^n$, explaining the main term in the asymptotic formula, and the smooth weight allows for a strong error term.

Given any integer $q$, we may define a congruence counting function
\[
N(\bf f,B,q) := \#\{\xx \in \ZZ^n ; f_1(\xx)\equiv \dotsb \equiv f_r(\xx) \equiv 0 \spmod q, \vert \xx \vert \leq B\}. 
\]
The following result records 'trivial' estimates for the counting functions $N(\f,B)$ and $N(\f,B,p)$ for any prime $p$.
\begin{lemma}
\label{lem:trivial} 
Given $\f \in \ZZ[x_1,\dotsc,x_n]^r$, we have the upper bound
\begin{equation}
\label{eq:trivial_B}
N(\f,B) \ll_{n,\d} B^{n-\rho_\infty(\f)}.
\end{equation}
Furthermore, for any prime $p$, we have
\begin{equation}
\label{eq:trivial_Bp}
N(\f,B,p) \ll_{n,\d} B^{n-\rho_p(\f)} + B^n p^{-\rho_p(\f)}.
\end{equation}
In particular, 
\begin{equation} 
\label{eq:trivial_p}
\# Z_{\f,p}(\FF_p) \ll_{n,\d} p^{n-\rho_p(\f)}.
\end{equation}
\end{lemma}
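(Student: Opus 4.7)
The plan is to derive all three estimates from a common geometric principle together with Schwartz--Zippel-type counting. The principle in question is the following: for any system $\g = (g_1,\dotsc,g_s)$ of polynomials over a field $\FF$ (here $\FF = \QQ$ or $\FF_p$), the affine zero set $V(\g) \subset \AA^n_\FF$ satisfies $\dim V(\g) \leq n - \rho$, where $\rho$ denotes the codimension in $\PP^{n-1}_\FF$ of the projective variety cut out by the leading forms of the $g_i$. I would prove this by forming the projective closure $\overline{V(\g)} \subset \PP^n_\FF$ and noting that its intersection with the hyperplane at infinity $\{X_0=0\} \cong \PP^{n-1}_\FF$ is contained in the projective zero set of the leading forms; the projective dimension theorem then yields $\dim V(\g) = \dim \overline{V(\g)} \leq \dim\bigl(\overline{V(\g)} \cap \{X_0=0\}\bigr) + 1 \leq (n-1-\rho)+1$.

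The bound \eqref{eq:trivial_B} now follows by combining this principle over $\QQ$ with the elementary archimedean counting result that any affine $\QQ$-variety of dimension $d$ cut out by polynomials of degree bounded in terms of $\d$ has $\ll B^d$ integer points in $[-B,B]^n$. The latter is proved by induction on $n$ from the Schwartz--Zippel observation that a nonzero polynomial of degree $\delta$ in $n$ variables has $\ll_{n,\delta} B^{n-1}$ integer zeros in $[-B,B]^n$. For \eqref{eq:trivial_p}, one applies the $\FF_p$-analog of this counting result to the affine cone over $Z_{\f,p}$, which has dimension $n - \rho_p(\f)$; since the cone's $\FF_p$-point count dominates $\#Z_{\f,p}(\FF_p)$, the claim follows (in fact one gets the stronger bound $\ll p^{n-1-\rho_p(\f)}$ by counting in $\PP^{n-1}$).

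For \eqref{eq:trivial_Bp}, each integer point counted by $N(\f,B,p)$ reduces modulo $p$ to an $\FF_p$-point of the affine variety $V_p \subset \AA^n_{\FF_p}$ defined by $\f \bmod p$. The codimension principle applied to $\f \bmod p$ gives $\dim V_p \leq n - \rho_p(\f)$, so $\#V_p(\FF_p) \ll p^{n - \rho_p(\f)}$ by the same $\FF_p$ counting result. Each residue has at most $(2B/p + 1)^n \ll (B/p)^n + 1$ lifts in $[-B,B]^n \cap \ZZ^n$, yielding
\[
N(\f,B,p) \ll p^{n - \rho_p(\f)}\bigl((B/p)^n + 1\bigr) = B^n p^{-\rho_p(\f)} + p^{n - \rho_p(\f)}.
\]
When $p \leq B$ the second term is absorbed into $B^{n - \rho_p(\f)}$; the remaining range $p > B$ is handled by invoking \eqref{eq:trivial_B} together with the inequality $\rho_p(\f) \leq \rho_\infty(\f)$ noted earlier. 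The only step one might worry about is the codimension principle itself, but that is an immediate consequence of the projective dimension theorem, as befits a lemma labelled \emph{trivial}.
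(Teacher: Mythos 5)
Your proof is correct and its overall strategy matches the paper's: a Schwartz--Zippel-type counting lemma for bounded-degree affine varieties, combined with lifting modulo $p$. The one place where you add genuine value is in stating and proving the ``codimension principle'' $\dim V(\g) \leq n - \rho$ via the projective closure and the hyperplane at infinity. The paper relies on exactly this fact but leaves it implicit in the terse remark that ``$\delta(Z_{\f,p}) \leq \prod d_i$ by B\'ezout's Theorem, this will imply the desired bound,'' silently conflating the affine variety $V(\bar f_1,\dotsc,\bar f_r) \subset \AA^n_{\FF_p}$ (whose points are being counted) with the projective variety $Z_{\f,p}$ (whose codimension controls the exponent). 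Your explicit projective-closure argument supplies the missing justification. The remaining differences are organizational rather than substantive: for the counting lemma you induct on the number of variables while the paper inducts on $\dim V$, choosing a coordinate along which every hyperplane slice is proper; for \eqref{eq:trivial_p} you count via the affine cone, which is effectively what the paper does when it ``takes $B=p$''; and for \eqref{eq:trivial_Bp} you split into $p \leq B$ (lifting) and $p > B$ (invoke \eqref{eq:trivial_B} and $\rho_p \leq \rho_\infty$), whereas the paper splits into $B \leq p$ (proving the bound $N(\f,B,p) \ll B^{n-\rho_p(\f)}$ directly over $\FF_p$ by counting $\FF_p$-points with representatives in $[-B,B]^n$) and $B > p$ (lifting). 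The paper's handling of the $B \leq p$ range is slightly more self-contained since it does not call on the $\QQ$-bound or on the inequality $\rho_p \leq \rho_\infty$, but both routes are valid and give the stated estimate.
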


Such bounds are certainly used frequently, with or without proof, in the literature. In particular, a well-known reference for the bound \eqref{eq:trivial_p} is the classic paper by Lang and Weil \cite{LangWeil}. We allow ourselves to give a self-contained proof of all of these statements, thereby reproducing parts of the proof of \cite[Lemma 4]{Browning-Heath-Brown09}.

\begin{proof}
Let us first assume that $B \leq p$ and prove the bound 
\begin{equation}
\label{eq:trivial1}
N(\f,B,p) \ll_{n,\d} B^{n-\rho_p(\f)}, 
\end{equation}
which will establish \eqref{eq:trivial_Bp} in this case. For any closed subvariety $V \subset \AA^n_{\FF_p}$, let $V(B)$ be the set of points in $V(\FF_p)$ having a representative $\xx$ in $\ZZ^n$ with $\vert \xx \vert \leq B$. Furthermore, let $\delta(V)$ be the sum of the degrees of the irreducible components of $V$. We shall prove that
\begin{equation}
\label{eq:trivial2}
\#V(B) \ll_{\delta(V)} B^{\dim(V)}. 
\end{equation}
Since $\delta(Z_{\f,p}) \leq \prod d_i$ by B\'ezout's Theorem \cite[Ex. 8.4.6]{Fulton}, this will imply the desired bound 
\eqref{eq:trivial1}. 
We proceed by induction on the dimension $m:= \dim(V)$. If $m=0$, it is clear that $\#V(B) \leq \delta(V)$. Assume now that $m \geq 1$. We may further assume that $V$ is irreducible, since it has at most $\delta(V)$ irreducible components, each of dimension at most $m$. By irreducibility, we may choose $i \in \{1,\dotsc,n\}$ such that the hyperplane $H_b$ defined by $x_i = b$ intersects $V$ properly for any $b \in \FF_p$, and write
\[
\#V(B) \leq \sum_{b \in \ZZ; |b| \leq B} \#(V\cap H_b)(B). 
\]
Now we have $\delta(V \cap H_b) \leq \delta(V) = \deg(V)$, so our induction hypothesis implies that $\#(V\cap H_b)(B) \ll_{\delta(V)} B^{m-1}$ for each $b$ in the sum, and the desired bound \eqref{eq:trivial2} follows. 

Taking $B = p$ in  \eqref{eq:trivial1}, we immediately infer the bound \eqref{eq:trivial_p}. Furthermore, we clearly obtain a proof of \eqref{eq:trivial_B} by replacing $\FF_p$ by $\QQ$ in the proof of \eqref{eq:trivial1}.

Suppose next that $B > p$. For each $\uu \in \FF_p^n$, the number of points $\xx$ counted by $N(\f,B,p)$ whose reduction (mod $p$) is $\uu$, is at most $O_n((B/p)^n)$. By \eqref{eq:trivial_p}, we conclude that
\(
N(\f,B,p) \ll_{n,\d}  B^n p^{-\rho_p(\f)},
\)
thus establishing the remaining case of \eqref{eq:trivial_Bp}.
\end{proof}

Our final result in this section is a variant of Bertini's theorem, formulated in an 'effective' way. Here, for $\aa \in \ZZ^n$, we use the notation $H_\aa \subset \PP^{n-1}$ for the hyperplane defined by the equation $\aa \cdot \xx = 0$.

\begin{lemma}
\label{lem:hyperplane}
Let $\Pi=\{p_1,\dotsc,p_N\}$ be a finite set of primes, and suppose that $\rho_{p}(\f)=r$ for each $p\in \Pi$. Then, provided  $\sum_{i=1}^N \frac{1}{p_i} \ll_{n,\d} 1$, there is a primitive integer vector $\aa \in \ZZ^n$, with $\aa \ll_{n,N,\d} 1$, such that the properties
\begin{itemize}
\item [{(i)}] $\dim(\Sing (Z_{\f,p} \cap H_{\mathbf{a}}))=\max\{-1,\dim(\Sing Z_{\f,p})-1\}$,
\item [{(ii)}]$\dim(Z_{\f,p}\cap H_{\mathbf{a}}) =\dim Z_{\f,p}-1$
\end{itemize}
hold for each $p \in \Pi$.
\end{lemma}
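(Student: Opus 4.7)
The plan is a standard effective Bertini argument combined with a lattice point count. For each prime $p \in \Pi$, let $\Sigma_p \subset \PP^{n-1}_{\FF_p}$ be the Zariski-closed subset consisting of hyperplane coordinates $\aa$ for which (i) or (ii) fails over $\FF_p$. Since $\rho_p(\f) = r$, the classical Bertini theorem (applied separately to $Z_{\f,p}$ and to $\Sing Z_{\f,p}$, if the latter is nonempty) shows that $\Sigma_p$ is a proper closed subset. The first step is a uniform degree bound: condition (ii) fails exactly when $H_\aa$ contains an irreducible component of $Z_{\f,p}$, an incidence condition of degree $\ll_\d 1$; and condition (i) fails exactly when the augmented Jacobian of $(\F,\aa \cdot \xx)$ drops rank at some point of $Z_{\f,p} \cap H_\aa$ outside $\Sing Z_{\f,p}$, cut out by minors of bounded degree. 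B\'ezout, together with $\deg Z_{\f,p} \ll_\d 1$, yields a bound $\sum_j \deg \Sigma_p^{(j)} \ll_{n,\d} 1$ independent of $p$.

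Next comes the counting. For a parameter $M \geq 1$, Lemma \ref{lem:trivial} applied to the affine cone on $\Sigma_p$ (of codimension at least $1$ in $\AA^n_{\FF_p}$) bounds the number of $\aa \in \ZZ^n \cap [-M,M]^n$ with reduction $\bar\aa \in \Sigma_p$ by $O_{n,\d}(M^{n-1} + M^n/p)$. Summing over the $N$ primes in $\Pi$ and invoking the hypothesis $\sum_{i=1}^N 1/p_i \leq c(n,\d)$, where we are free to take $c$ arbitrarily small, the total number of integer vectors in the box that are bad at some prime is at most
\[
C_1(n,\d)\, N M^{n-1} + C_2(n,\d)\, c\, M^n.
\]
Since the number of primitive vectors in $[-M,M]^n$ is $\gg_n M^n$ once $M \geq M_0(n)$, choosing $c$ small enough that $C_2 c < \tfrac{1}{4\zeta(n)}$, and then $M \gg_{n,N,\d} 1$ large enough that $C_1 N M^{n-1} < \tfrac{1}{4\zeta(n)} M^n$, produces a primitive $\aa$ with $|\aa| \leq M$ avoiding every $\Sigma_p$, as required.

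The main obstacle is the uniform effective bound on $\deg \Sigma_p$ --- one needs to write down explicit defining equations for (i) and (ii), essentially as rank-drop loci of a matrix whose entries are linear in $\aa$ with coefficients involving partial derivatives of the $F_i$, and apply B\'ezout to bound the degree without any dependence on $p$ or $\|\F\|$. Once this is in place, the rest is a mechanical counting argument that only uses Lemma \ref{lem:trivial} and the positive density of primitive lattice points.
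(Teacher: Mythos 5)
Your proposal follows essentially the same route as the paper: show that for each $p\in\Pi$ the bad hyperplanes lie in a closed subset of degree $O_{n,\d}(1)$ independent of $p$, then use the point-count of Lemma \ref{lem:trivial} together with the smallness of $\sum 1/p_i$ and the positive density of primitive lattice vectors to find a good $\aa$ of bounded size. The only difference is cosmetic: where you sketch the uniform degree bound via an effective Bertini/B\'ezout argument (and correctly flag it as the crux), the paper simply quotes a previously established result (Lemma 2.8 of \cite{Marmon08}) furnishing integer forms $\Phi_i$ of degree $O_{n,\d}(1)$, not identically zero mod $p_i$, whose nonvanishing mod $p_i$ guarantees properties (i) and (ii); the ensuing counting argument is then identical to yours.
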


\begin{proof}
It follows from \cite[Lemma 2.8]{Marmon08} that there exist forms $\Phi_i \in \ZZ[\xi_1,\dotsc,\xi_n]$ of degree $O_{n,\d}(1)$, not identically divisible by $p_i$, such that the properties (i) and (ii) hold for $p = p_i$ as soon as $p_i \nmid \Phi_i(\aa)$. For any $A \in \NN$, let 
\[
\cM(A) := \{\aa \in \ZZ^n \cap [-A,A]^n;\ \gcd(a_1,\dotsc,a_n) = 1\}.  
\]
We note that
\[
\#\cM(A) = \frac{2^n}{\zeta(n)}A^n + O(A^{n-1}\log(A)) > A^n
\]
as soon as $A$ exceeds some explicit constant $C_1(n)$. Furthermore, we put
\[
\cN_i(A) := \{\aa \in \cM(A);\ p_i\mid \Phi_i(\aa)\} 
\]
for $i=1,\dotsc,n$ and $\cN(A) := \cup_i \cN_i(A)$. By Lemma \ref{lem:trivial}, there is then a constant $C_2(n,\d)$ such that 
\[
\#\cN_i(A) \leq C_2(n,\d) \left(A^{n-1} + A^n p_i^{-1} \right).
\]
for all $i$. This implies that
\[
\#\cN(A) \leq C_2(n,\d) \left(NA^{n-1} + A^n \sum_{i=1}^N \frac{1}{p_i}\right),
\]
so if $$\sum_{i=1}^N \frac{1}{p_i} \leq \frac{1}{2C_2(n,\d)} \quad \text{and} \quad A \geq \max\left\{C_1(n), \frac{2N}{C_2(n,\d)}\right\}$$ we get
$\#\cN(A) \leq A^n < \#\cM(A)$, and in particular $\cN(A) \neq \cM(A)$, establishing the existence of an integer vector $\aa$ with the desired properties.
\end{proof}


\section{Proof of Proposition \ref{prop:heart}}
\label{sec:proof_heart}
We shall argue by induction over $m$. The inductive base $m=0$ follows from \cite[Thm. 3.3]{Marmon08}. Indeed, we then have $\cR=r$ and $q_d = p_0$ for all $d$. In this case, \cite[eq.~(5)]{Marmon08}, with $q=p_0 \asymp \xi^2$, reads
\[
N_W(\f,B,\q) - \frac{\cB}{q^r} \ll B^{s+2}\xi^{n-r-s-2} = B^n \xi^{-r} \left(\frac{\xi}{B} \right)^{n-s-2},
\]
as required for \eqref{eq:asymptotic'}.

For $m \geq 1$, we shall use induction over $s$, the inductive base being the case $s=-1$. Thus, suppose that $s_{p_i}(\hat \f_i) = -1$ for all $0 \leq i \leq m$. The quantity that we wish to estimate is
\[
V_W(\f,B,\q) := N_W(\f,B,\q) - \frac{\cB}{Q},
\]
where
\(
Q:=q_2^{r_2}q_3^{r_3}\dotsb q_D^{r_D}.
\)
We shall do this by carrying out a $p_m$-van der Corput differencing step, reducing all the moduli involved by a factor $p_m$. Our first step is to split the sum into congruence classes modulo $p_m$, yielding
\begin{align}
\label{eq:splitsum}
V_W(\f,B,\q) &= \sum_{\substack{\uu \spmod{p_m}\\ p_m \mid \f(\uu)}} \sum_{\substack{\xx \equiv \uu \spmod{p_m}\\ \q \mid \tilde\f(\xx)}} W(B^{-1}\xx) - Q^{-1} \cB,
\end{align}
where $\tilde\f:= \hat\f_1 = (\f_3,\dotsc,\f_D)$.
Note that if $\xx \equiv \uu \spmod{p_m}$, where $p_m \mid \f(\uu)$, then for each $3 \leq d \leq D$, the condition $q_d \mid \f_d(\xx)$ is equivalent to $\tilde q_d \mid \f_d(\xx)$, where $\tilde q_d := p_m^{-1}q_d$.
Therefore, the 'expected value' of the inner sum 
\[
\sum_{\substack{\xx \equiv \uu \spmod{p_m}\\ \q \mid \tilde\f(\xx)}} W(B^{-1}\xx) =: \Upsilon(\uu),
\]
say, is $p_m^{-n} \tilde Q^{-1} \cB$,  where
\[
\tilde Q := \tilde{q}_3^{r_3} \dotsb \tilde q_D^{r_D} = p_m^{-r} Q.
\]
Writing $\tilde \q := (\tilde q_3,\dotsc,\tilde q_3,\dotsc,\tilde q_D,\dotsc,\tilde q_D)\in \NN^{r_3} \times \dotsb \times \NN^{r_D}$ and
\[
S := \sum_{\substack{\uu \spmod{p_m}\\ p_m \mid \f(\uu)}} \left\{ 
\Upsilon(\uu) - p_m^{-n} \tilde Q^{-1} \cB \right\},
\]
we obtain
\begin{align*}
V_W(\f,B,\q) &= S + p_m^{-n+r} Q^{-1} \cB \sum_{\substack{\uu \spmod{p_m}\\ p_m \mid \f(\uu)}} 1 - Q^{-1} \cB.
\end{align*}
We have assumed that $\rho_{p_m}(\f) = r$ and $s_{p_m}(\f) = -1$, so we have
\[
\sum_{\substack{\uu \spmod{p_m}\\ p_m \mid \f(\uu)}} 1 = p_m^{n-r} + O\left(p_m^{(n-r+1)/2}\right),
\]
by Hooley's extension \cite{Hooley} of Deligne's theorem (see \cite[Lemma 3.2]{Marmon08}).
Observing that
\(
Q \asymp \xi^{\kappa}, 
\)
where 
\[
\kappa := \sum_{i=2}^{m+1} (i-1)r_i + (m+2)\sum_{i=m+2}^D r_i,
\]
we conclude that
\begin{equation}
V_W(\f,B,\q) = S + O\left(B^n \xi^{-\kappa - (n-r-1)/2}\right).
\end{equation}
The error term here is clearly admissible for \eqref{eq:asymptotic'}. Indeed, since $\kappa - \frac{r}2 \geq \cR$ and $\xi^{-1} \leq \xi/B \leq 1$, we have
\begin{align*}
B^n \xi^{-\kappa - (n-r-1)/2} &< B^n \xi^{-\cR-(n-1)/2} \leq B^n \xi^{-\cR} \left(\frac{\xi}{B} \right)^{(n-1)/2} \\
&\leq B^n \xi^{-\cR} \left(\frac{\xi}{B} \right)^{(n-1)/2^m},
\end{align*}
as required.

Applying Cauchy's inequality to the sum defining $S$, and using \eqref{eq:trivial_p} with $\rho_{p_m}(\f) = r$, we get
\[
S \ll p_m^{(n-r)/2} \Sigma^{1/2},
\]
and thus
\[
V_W(\f,B,\q) \ll p_m^{(n-r)/2} \Sigma^{1/2} + B^n \xi^{-\cR} \left(\frac{\xi}{B} \right)^{(n-1)/2^m}
\]
where
\[
\Sigma := \sum_{\uu \spmod{p_m}} \left\{ \Upsilon(\uu) - p_m^{-n} \tilde Q^{-1} \cB \right\}^2
\]
may be viewed as the 'variance' of the quantity $\Upsilon(\uu)$ introduced above.
To facilitate the analysis of this sum, we add some extra terms to it, obtaining
\begin{equation}
\label{eq:sigma'}
\Sigma \leq
\sum_{3 \leq d \leq D} \sum_{\aa_d \in (\ZZ/\tilde q_d \ZZ)^{r_d}} \sum_{\uu \spmod{p_m}} T_\uu(\aa_3,\dotsc,\aa_D)^2,
\end{equation}
where
\[
T_\uu(\aa_3,\dotsc,\aa_D):=\sum_{\substack{\xx \equiv \uu \spmod{p_m}\\ \f_d(\xx) \equiv  \aa_d \spmod{\tilde q_d},\ 3 \leq d \leq D
}} W(B^{-1}\xx) - p_m^{-n} \tilde Q^{-1} \cB.
\]
In particular, $T_\uu(\0,\dotsc,\0) = \Upsilon(\uu) - p_m^{-n} \tilde Q^{-1} \cB$.
\begin{rem}
\label{rem:salberger}
As seen in \cite{Marmon10}, one can obtain better bounds by circumventing this step, at the cost of more complicated geometric considerations. The current approach, however, seems better suited for an inductive argument with an arbitrary number of differencing steps.
\end{rem}

When summing the expression 
\begin{multline*}
T_\uu(\aa_3,\dotsc,\aa_D)^2 = \left\{\sum_{\substack{\xx \equiv \uu \spmod{p_m}\\ \f_d(\xx) \equiv  \aa_d \spmod{\tilde q_d},\ 3 \leq d \leq D
}}  \hspace{-15pt} W(B^{-1}\xx)\right\}^2 \\
- 2 p_m^{-n} \tilde Q^{-1} \cB 
\left\{\sum_{\substack{\xx \equiv \uu \spmod{p_m}\\ \f_d(\xx) \equiv  \aa_d \spmod{\tilde q_d},\ 3 \leq d \leq D
}}  \hspace{-15pt} W(B^{-1}\xx)\right\} 
+ p_m^{-2n} \tilde Q^{-2} \cB^2
\end{multline*}
over $\aa_3,\dotsc,\aa_D$ and $\uu$ as in \eqref{eq:sigma'}, we see that the contribution from the last term precisely cancels the contribution from one of the cross terms, 
and thus the right hand side of \eqref{eq:sigma'} equals
\begin{multline*}
\hspace{-5pt}\sum_{3 \leq d \leq D} \sum_{\aa_d \spmod{\tilde q_d}} \sum_{\uu \spmod{p_m}} \left\{\sum_{\substack{\xx \equiv \uu \spmod{p_m}\\ \f_d(\xx) \equiv  \aa_d \spmod{\tilde q_d},\ 3 \leq d \leq D
}}  \hspace{-15pt} W(B^{-1}\xx)\right\}^2
- p_m^{-n} \tilde Q^{-1} \cB^2. \hspace{-10pt}
\end{multline*}
Expanding the square, we express the leftmost sum as
\begin{align*}
&\hphantom{=} \sum_{\xx \in \ZZ^n} \sum_{\substack{\xx' \equiv \xx \spmod{p_m}\\ \f_d(\xx') \equiv \f_d(\xx) \spmod{\tilde q_d},\ 3\leq d \leq D}} W(B^{-1}\xx) W(B^{-1}\xx')\\
&=\sum_{\yy \in \ZZ^n} \sum_{\substack{\xx \in \ZZ^n\\ \tilde q_d \mid \f_d(\xx+p_m \yy)-\f_d(\xx),\ 3 \leq d \leq D}} W(B^{-1}(\xx+p_m \yy)) W(B^{-1}\xx).
\end{align*}
Now we define, for each $\yy \in \ZZ^n$ and each polynomial $f_i$, the differenced 
polynomial $f_i^\yy$ by setting
\[
f_i^\yy(\xx) = f_i(\xx+p_m \yy) - f_i(\xx).
\]
Accordingly, we put $\f_d^\yy :=(f_{d,1}^\yy,\dotsc,f_{d,r_d}^\yy)$ for each $3 \leq d \leq D$, $\hat \f_i^\yy := (\f_{m-i+2}^\yy,\dotsc,\f_D^\yy)$ for each $0 \leq i \leq m-1$ and $\tilde \f^\yy := \hat \f_3^\yy$. Similarly, we define new weight functions $W_\yy$ by
\[
W_\yy(\ttt) := W(\ttt)W(\ttt +B^{-1} p_m \yy).
\]
By Proposition \ref{prop:weights}, we have $W_\yy \in \cC(R,(\kappa_j'))$, where $\kappa_j'$ may be bounded in terms of the numbers $\kappa_0,\dotsc,\kappa_j$. In this notation, we conclude from the above considerations that
\begin{equation}
\label{eq:sigma_1'}
\Sigma \leq \sum_{\yy \in \ZZ^n} N_{W_\yy}(\tilde \f^\yy,B,\tilde\q) -  p_m^{-n} \tilde Q^{-1} \cB^2.
\end{equation}
In fact, by the definition of $W_\yy$, the sum runs only over $\yy \ll B/p_m$, a fact we shall soon use.

If we denote the leading form of each polynomial $f_i^\yy$ by $F_i^\yy$, then we observe that 
\[
F_{i}^{\mathbf{y}}=p_m\mathbf{y}\cdot\nabla F_{i},
\]
unless the right hand side vanishes identically in $\mathbf{x}$. But by Remark \ref{rem:degenerate}, this happens only if $\yy = \0$. In particular, for $\yy \neq \0$, $f_i^\yy$ is a polynomial of degree $d_i -1$. We shall shortly invoke our induction hypothesis, with $\f$ replaced by $\tilde\f^\yy$ and $m$ replaced by $m-1$, to obtain an estimate for
\begin{equation}
\label{eq:V_y}
V_{W_\yy}(\tilde \f^\yy,B,\tilde \q) = N_{W_\yy}(\tilde \f^\yy,B,\tilde\q) - \tilde Q^{-1} \sum_{\xx \in \ZZ^n} W_\yy(B^{-1}\xx)
\end{equation}
for each $\yy \neq \0$. 
We then need to show that the last term in \eqref{eq:V_y}, when summed over $\yy \in \ZZ^n$, approximately cancels the last term in \eqref{eq:sigma'}. To this end, note that
\begin{multline*}
\sum_{\yy \in \ZZ^n} N_{W_\yy}(\tilde\f^\yy,B,\tilde\q) - \sum_{\yy \in \ZZ^n} V_{W_\yy}(\tilde\f^\yy,B,\tilde\q) \\
= \tilde Q^{-1} \sum_{\yy \in \ZZ^n} \sum_{\xx \in \ZZ^n}  W(B^{-1}(\xx+p_m \yy)) W(B^{-1}\xx),
\end{multline*}
which by Lemma \ref{lem:poisson} is
\[
= p_m^{-n} \tilde Q^{-1} \cB^2 + O_N\left(p_m^{-n}\tilde Q^{-1} B^{2n} (p_m/B)^{N}\right)
\]
for any $N \in \NN$. The rightmost term here is
\[
\ll p_m^{-n+r} B^{2n} \xi^{-\kappa} \left(\frac{\xi}{B}\right)^{N},
\]
and thus its contribution to $S$ is
\begin{align*}
&\ll B^n \xi^{-\kappa/2} \left(\frac{\xi}{B}\right)^{N} \leq B^n \xi^{-\cR} \left(\frac{\xi}{B}\right)^{N},
\end{align*}
since $\kappa \geq 2\cR$. This is certainly admissible if $N$ is chosen large enough. Thus we obtain
\begin{equation}
\label{eq:induction}
\begin{split}
V_W(\f,B,\q) &\ll p_m^{(n-r)/2} \left(\sum_{\yy \ll B/p_m} V_{W_\yy}(\tilde\f^\yy,B,\tilde\q) \right)^{1/2} \\
&\hphantom{\ll} + O\left(B^n \xi^{-\cR} \left(\frac{\xi}{B}\right)^{(n-1)/2^m}\right).
\end{split}
\end{equation}

For each $\yy \in \ZZ^n$, we define
\[
\rho(\yy) := \min_{0 \leq i \leq m-1} \rho_{p_i}(\tilde \f^\yy). 
\]
The number of equations in our new system $\tilde \f^\yy = \0$ is $\tilde r:= r-r_2$, so we have $0 \leq \rho(\yy) \leq \tilde r$. In estimating the sum
\begin{equation}
\label{eq:sum_V_y}
\sum_{\yy \ll B/p_m} V_{W_\yy}(\tilde\f^\yy,B,\tilde\q), 
\end{equation}
we shall first consider the contribution from those $\yy$ for which the system $\tilde\f^\yy$ still defines a complete intersection, that is, for which $\rho(\yy) = \tilde r$. In this case our induction hypothesis applies. Putting
\[
s(\yy)  := \max_{0 \leq i \leq m-1} s_{p_i}(\hat \f_i^\yy),
\]
we have
\[
V_{W_\yy}(\tilde\f^\yy,B,\tilde\q) \ll B^n \xi^{-\tilde \cR} \left(\frac{\xi}{B}\right)^{(n-s(\yy)-2)/2^{m-1}} 
\]
where
\[
\tilde \cR = \sum_{i=2}^{m} (1-2^{-i+1})r_{i+1} + \sum_{i=m+1}^{D-1} r_{i+1}. 
\]
We shall apply Lemma \ref{lem:HBlemma2} to estimate how often $s(\yy)$ attains a given value. Thus, suppose that $s(\yy)  =t$, where $-1 \leq t \leq n-1$. (Here we ignore the fact that our assumption on $\rho(\yy)$ would allow us to use a smaller upper bound for $s(\yy)$, since the bounds that we use are such that large values of $t$ make a negligible contribution.) Let us fix a prime $p_i$ such that $s_{p_i}(\hat \f_i^\yy) = t$. Invoking our condition \eqref{eq:primes_gcd} and recalling that $s_{p_i}(\hat \f_i) = -1$ by assumption, we shall then apply Lemma \ref{lem:HBlemma2} to the forms $F_{d,j}$, where $i+2 \leq d \leq D$.  If we denote by $V_\yy \subset \PP^{n-1}_{\FF_{p_i}}$ the closed subvariety defined by
\[
F_{d,j}^\yy = 0, \quad 1\leq j \leq r_d,\ i+2\leq d \leq D,
\]
the above assumption means that $\dim (\Sing(V_\yy))= t$. However, since $\rho_{p_i}(\tilde \f^\yy) = \tilde r$ by assumption, and thus necessarily $\rho_{p_i}(\hat \f_i^\yy) = r_{i+2}+\dotsb+r_D$, the singular locus of $V_\yy$ is precisely the algebraic set $S_\yy$ defined in Lemma \ref{lem:HBlemma2}, by the Jacobian Criterion. Hence, $\yy$ lies in (the affine cone over) the algebraic set $T_s$. By Lemma \ref{lem:HBlemma2} and Lemma \ref{lem:trivial}, we conclude that there are at most $O((B/p_m)^{n-t-1})$ choices of $\yy$ for which $s(\yy) = t$.
Thus, the contribution to \eqref{eq:sum_V_y} from $\yy$ such that $\rho(\yy) = \tilde r$ is at most
\begin{align}
\nonumber
&\ll B^n \xi^{-\tilde \cR} \left(\frac{\xi}{B}\right)^{(n-1)/2^{m-1}} \sum_{t'=0}^{n} \sum_{\substack{\yy \ll B/p_m\\s(\yy) = t'-1}} \left(\frac{B}{\xi}\right)^{t'/2^{m-1}} \\
\nonumber
&\ll B^n \xi^{-\tilde \cR} \left(\frac{\xi}{B}\right)^{(n-1)/2^{m-1}} \sum_{t'=0}^{n} \left(\frac{B}{\xi}\right)^{n-t'(1-1/2^{m-1})} \\
\nonumber
&\ll B^{2n} \xi^{-n-\tilde \cR} \left(\frac{\xi}{B}\right)^{(n-1)/2^{m-1}}.
\end{align}
The contribution to the right hand side of \eqref{eq:induction} from these values of $\yy$ is then
\begin{align*}
&\ll p_m^{(n-r)/2}  B^{n} \xi^{-(n+\tilde \cR)/2} \left(\frac{\xi}{B}\right)^{(n-1)/2^{m}} \\
&\ll B^n \xi^{-(r+\tilde \cR)/2} \left(\frac{\xi}{B}\right)^{(n-1)/2^{m}}\\
& = B^n \xi^{-\cR} \left(\frac{\xi}{B}\right)^{(n-1)/2^{m}}
\end{align*}
as required.

Now we treat those values of $\yy$ for which $\rho(\yy) < \tilde r$. In this case, the induction hypothesis does not apply, but we may apply a more elementary bound. If $\rho(\yy) = \rho$, then there is some $p_i$ such that $\rho_{p_i}(\tilde \f^\yy) = \rho$, so a crude upper bound for $N_{W_\yy}(\tilde \f^\yy,B,\tilde\q)$, and hence for $V_{W_\yy}(\tilde \f^\yy,B,\tilde \q)$, is given by $B^n\xi^{-\rho}$, by Lemma \ref{lem:trivial}. Furthermore, it is easy to see \cite[Lemma 4.2]{Marmon08} that in this case one actually has $\yy \in T_{n-\rho-1}$. Again, by Lemma \ref{lem:HBlemma2} and Lemma \ref{lem:trivial} we conclude that the contribution to \eqref{eq:sum_V_y} from values of $\yy$ with $\rho(\yy) < \tilde r$ (including $\yy = \0$) is at most
\begin{align}
\nonumber
&\ll \sum_{\rho = 0}^{\tilde r -1} \sum_{\substack{\yy \ll B/p_m\\ \rho(\yy) = \rho}} B^n \xi^{-\rho} \ll \sum_{\rho = 0}^{\tilde r -1} \left(\frac{B}{\xi}\right)^{\rho} B^n \xi^{-\rho} \\
\nonumber
&= B^n \sum_{\rho = 0}^{\tilde r -1}\left(\frac{B}{\xi^2}\right)^{\rho} \ll B^n.
\end{align}
This yields a contribution to the right hand side of \eqref{eq:induction} of size at most
\[
\ll B^n \xi^{-r/2} \left(\frac{\xi}{B} \right)^{n/2}, 
\]
corresponding to the second error term in \eqref{eq:asymptotic'}. This concludes the treatment of the case $s = -1$.

Next, assume that $s \geq 0$. By Lemma \ref{lem:hyperplane}, there is a primitive integer vector $\aa \ll_{n,D} 1$ such that
\begin{equation}
\label{eq:slice'}
\rho_{p_i}(\f \rvert_\aa) = r+1 \quad \text{and} \quad s_{p_i}(\hat\f_i\rvert_\aa) = s-1
\end{equation}
for all $0\leq i \leq m$, where $\f\rvert_\aa := (f_1,\dotsc,f_r,a_1 x_1+\dotsb a_n x_n)$, and analogously for $\hat\f_i\rvert_\aa$. We may then find a unimodular matrix $M \in \mathrm{GL}_n(\ZZ)$, all of whose entries are $O_{n,D}(1)$, such that $M^T \aa = \ee_n$, and write
\begin{align*}
\sum_{\substack{\xx \in \ZZ^n\\  \q \mid \f(\xx)}} W(B^{-1}(\xx)) &= \sum_{b \ll B} \sum_{\substack{\xx \in \ZZ^n\\ \q\mid \f(\xx),\ \aa\cdot \xx = b}} W(B^{-1}\xx)\\
&= \sum_{b \ll B} \sum_{\substack{\xx' \in \ZZ^n\\  \q \mid \f(M\xx'),\ x'_n = b}} W(B^{-1}M\xx') \\
&= \sum_{b \ll B} \sum_{\substack{\xx \in \ZZ^{n-1}\\ \q \mid {\g}_b(\xx)}} \tilde{W}_{B^{-1}b} (B^{-1}\xx).
\end{align*}
Here we have put ${\g}_b = (g_{b,1},\dotsc,g_{b,r})$, where $g_{b,i}$, for each $1\leq i \leq r$, is the polynomial in $n-1$ variables given by $g_{b,i}(\xx) := f_i(M(\xx,b))$, and we have defined $\tilde W_u : \RR^{n-1} \to \RR$ by $\tilde W_u(\ttt) := W(M(\ttt,u))$. Moreover, we have
\[
\sum_{b \ll B} \sum_{\xx \in \ZZ^{n-1}} \tilde{W}_{B^{-1}b} (B^{-1}\xx) = \sum_{\xx \in \ZZ^n} W(B^{-1}M\xx) = \sum_{\xx \in \ZZ^n} W(B^{-1}\xx),
\]
as $M$ is unimodular.

It follows that
\begin{align}
\nonumber V_W(\f,B,\q) &=  \sum_{\substack{\xx \in \ZZ^n\\ \q\mid \f(\xx)}} W(B^{-1}(\xx)) - Q^{-1} \sum_{\xx \in \ZZ^n} W(B^{-1}(\xx)) \\
\label{eq:slicesum'}
&= \sum_{b \ll B} V_{\tilde W_{B^{-1}b}}\big(\g_b,B,\q\big).
\end{align}

Note that the polynomials $\g_{b,i}$ have the same leading forms $G_i(\xx) := F_i(M(\xx,0))$ for each $b \in \ZZ$ (indeed, by \eqref{eq:slice'}, $G_i$ cannot vanish identically), and that
the subscheme $V(G_1,\dotsc,G_r) \subset \PP^{n-2}_\ZZ$ is isomorphic to the subscheme $V(F_1,\dotsc,F_r,\aa \cdot \xx) \subset \PP^{n-1}_\ZZ$. Thus we may conclude by \eqref{eq:slice'} that $\rho_{p_i}(\g_b) = r$ and $s_{p_i}(\g_b) = s-1$ for all $b \in \ZZ$ and all $0 \leq i \leq m$. Moreover, we have $\tilde W_{B^{-1}b} \in \cC(R',(\kappa_j'))$ where $R'\ll R$ and $\kappa_j' \ll \kappa_j$, by Proposition \ref{prop:weights}. Consequently, we may invoke our induction hypothesis to obtain the bound
\begin{multline*}
V_{\tilde W_{B^{-1}b}}\big(\g_b,B,\q\big) \ll B^{n-1} \xi^{-\cR} \left(\frac{\xi}{B}\right)^{(n-s-2)/2^m} 
\\
+ B^{n-1} \xi^{-r/2} \left(\frac{\xi}{B} \right)^{(n-s-1)/2}.
\end{multline*}
Summing over $b \ll B$ according to \eqref{eq:slicesum'}, we get
\[
V_W(\f,B,\q) \ll  B^{n} \xi^{-\cR} \left(\frac{\xi}{B}\right)^{(n-s-2)/2^m} 
+ B^{n} \xi^{-r/2} \left(\frac{\xi}{B} \right)^{(n-s-1)/2},
\]
as desired.

\section{Proof of the main results}
\label{sec:proof_main}

Suppose that we are given a system $\f$ of polynomials with $\rho_\infty(\f) = r$ and $s_\infty(\f) = -1$. We want to apply Proposition \ref{prop:heart}, so our first step will be to find an alternative system $\g = (g_1,\dotsc,g_r)$, generating the same ideal $\langle\f\rangle$, such that $s_\infty(\hat \g_i) = -1$ for each $i=0,\dotsc,D-2$. The following result, which is a slight modification of \cite[Lemma 3.1]{Browning-Dietmann-Heath-Brown}, shows that this is possible.

\begin{lemma}
\label{lem:optimalsystem}
Let $\f=(f_1,\dotsc,f_r)$ be a system of polynomials in $\ZZ[\xx]$, with 
\[
\deg(f_1) \leq \dotsb \leq \deg(f_r), 
\]
and let $I = \langle\f\rangle$ be the ideal in $\ZZ[\xx]$ that it generates. Suppose that the leading forms $F_1,\dotsc,F_r$ define a non-singular complete intersection of codimension $r$ in $\PP^{n-1}_\QQ$. Then there is another system $\g = (g_1,\dotsc,g_r)$, with $\deg(g_i) = \deg(f_i)$ for all $i$, 
such that the corresponding leading forms $G_1,\dotsc,G_r$ have the following property:
\begin{equation}
\label{eq:optimalsystem}
\begin{split}
&\text{For each $0 \leq j \leq r$, the subset $G_{r-j+1},\dotsc,G_r$ defines} \\
&\text{a non-singular complete intersection of codimension $j$.}
\end{split}
\end{equation}
Furthermore, $\Vert\g\Vert$ may be bounded above by a polynomial in $\Vert\f\Vert$ whose height and degree are bounded in terms of $n$ and $\d$.
\end{lemma}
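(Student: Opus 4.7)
The strategy is backward induction on $i = r, r-1, \dotsc, 1$, constructing
\[
g_i = f_i + \sum_{k < i} P_{ik} f_k,
\]
where each $P_{ik} \in \ZZ[\xx]$ is a form of degree $d_i - d_k$ to be chosen (so $P_{ik}$ vanishes when $d_k > d_i$ and is a constant when $d_k = d_i$). The transition $\f \mapsto \g$ is given by a lower unitriangular matrix over $\ZZ[\xx]$ and is therefore invertible, giving $\langle \g \rangle = \langle \f \rangle$. Passing to leading parts, the induced transition $F_i \mapsto G_i = F_i + \sum_{k<i} P_{ik} F_k$ is likewise lower unitriangular, so $\langle G_1, \dotsc, G_r \rangle = \langle F_1, \dotsc, F_r \rangle$; in particular the case $j = r$ of \eqref{eq:optimalsystem} is automatic while $j = 0$ is trivial, and the problem reduces to the intermediate stages of the induction.

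At step $i$, suppose inductively that $X_{i+1} := V(G_{i+1}, \dotsc, G_r) \subset \PP^{n-1}_\QQ$ is a non-singular complete intersection of codimension $r - i$. As the coefficients of the $P_{ik}$ vary, $G_i$ ranges over an affine linear system $F_i + \cL$ on $\PP^{n-1}_\QQ$, where $\cL$ is the $\QQ$-span of the products $P \cdot F_k$ with $k < i$ and $P$ a form of degree $d_i - d_k$. A direct calculation, using the containment $G_l - F_l \in \langle F_1, \dotsc, F_{l-1} \rangle$ inductively in $l \geq i+1$, shows that the base locus of this linear system on $X_{i+1}$ coincides exactly with the smooth complete intersection $V(F_1, \dotsc, F_r)$. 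Away from the base locus, the classical Bertini theorem in characteristic zero yields smoothness of $X_{i+1} \cap V(G_i)$ for a general $G_i$; at a point $p \in V(F_1, \dotsc, F_r)$, where $F_k(p) = 0$ for all $k$, the Jacobian of $(G_i, G_{i+1}, \dotsc, G_r)$ at $p$ expressed in the basis $\nabla F_1(p), \dotsc, \nabla F_r(p)$ (which has rank $r$ by hypothesis) has its rightmost $(r-i+1)\times(r-i+1)$ block lower unitriangular, hence is automatically of maximal rank $r - i + 1$ regardless of the values of $P_{lk}(p)$. Therefore the set of admissible parameter tuples $(P_{ik})_{k<i}$ is a non-empty Zariski open subset $U$ of the affine parameter space over $\QQ$.

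It remains to pick an integer point of $U$ of controlled height. The complement of $U$ is cut out by the vanishing of finitely many polynomials in the coefficients of the $P_{ik}$'s --- for instance, appropriate resultants built from the maximal minors of the Jacobian of $(G_i, \dotsc, G_r)$ together with the $G_l$'s themselves --- whose degrees (in those coefficients) are bounded in terms of $n$ and $\d$ and whose own coefficients are polynomials of degree and height bounded in terms of $n$ and $\d$ in the coefficients of $\F$. Because $U$ is non-empty over $\QQ$, none of these polynomials vanishes identically in the $P_{ik}$'s, and a standard Schwartz--Zippel-type counting argument in the spirit of Lemma \ref{lem:hyperplane} (ultimately resting on Lemma \ref{lem:trivial}) produces integer $P_{ik}$'s in $U$ with coefficients of absolute value $O_{n,\d}(1)$. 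Substituting back into $g_i = f_i + \sum_{k<i} P_{ik} f_k$ gives a bound of the form $\Vert \g \Vert \ll_{n, \d} \Vert \f \Vert$, which is polynomial in $\Vert \f \Vert$ as required. The main technical obstacle is the Bertini step at the base locus; what makes it tractable is precisely the unitriangular shape of the transition between the $F_k$'s and the $G_l$'s, which converts the hypothesised non-singularity of $V(\F)$ directly into non-singularity of each intermediate $X_i$.
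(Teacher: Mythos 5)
Your proposal is correct and follows essentially the same route as the paper: a lower unitriangular change of generators, a Bertini-type argument at each stage, and a Schwartz--Zippel counting step to pick integer parameters of controlled size. The only cosmetic differences are that you carry out the Bertini step explicitly (the paper delegates it to \cite[Lemma 3.1]{Browning-Dietmann-Heath-Brown}, whose construction uses the restricted pencil spanned by $x_k^{d_i-d_j}F_j$ rather than your general forms $P_{ik}$, though the base locus and hence the conclusion are the same), and your direct computation gives the parameter bound $O_{n,\d}(1)$ and hence $\Vert\g\Vert\ll_{n,\d}\Vert\f\Vert$, slightly sharper than the polynomial bound the paper states but of course still within what the lemma requires.
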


\begin{proof}
Let us write $I'$ for the ideal generated by the leading forms $F_i$. In the proof of \cite[Lemma 3.1]{Browning-Dietmann-Heath-Brown}, a new system of generators $G_i$ satisfying the property \eqref{eq:optimalsystem} is found by taking
\begin{equation}
\label{eq:pencil}
G_i = \sum_{\substack{1\leq j \leq i\\ d_i = d_j}}\lambda_{ij} F_j + 
\sum_{\substack{1\leq j \leq i\\ d_i > d_j}} \sum_{1 \leq k \leq n} \lambda_{ijk} x_k^{d_i-d_j} F_j 
\end{equation}
for suitable integers $\lambda_{ij},\lambda_{ijk}$. It is not hard to see that one may take $\lambda_{ii} = 1$ for all $i$, so that the $G_i$ indeed generate the ideal $I'$ as a $\ZZ[\xx]$-module. It follows immediately by construction that the polynomials
\[
g_i = \sum_{\substack{1\leq j \leq i\\ d_i = d_j}}\lambda_{ij} f_j + 
\sum_{\substack{1\leq j \leq i\\ d_i > d_j}} \sum_{1 \leq k \leq n} \lambda_{ijk} x_k^{d_i-d_j} f_j 
\]
then generate the ideal $I$. 

It remains to verify the height bound. To this end, we need to redo the above argument in a more abstract setting. Let $\PP_{d_i}$, for $i = 1,\dotsc,r$ be the projective space $\PP_\ZZ^{N_i}$ parameterizing degree $d_i$ hypersurfaces in $\PP^{n-1}_\ZZ$. By abuse of notation, we will write $F_i$ for the element of $\PP_{d_i}$ representing the hypersurface $F_i=0$, and $\F=(F_1,\dotsc,F_r)$ for an element of $\PP_{d_1}\times \dotsb \times \PP_{d_r}$. Moreover, for each $i=1,\dotsc,r$, we consider parameter spaces $\mathbf{\Lambda}_i = \PP^{M_i-1}_\ZZ$, where $M_i$ is the number of unknowns $\lambda_{ij},\lambda_{ijk}$ occurring in the right hand side of \eqref{eq:pencil} for a fixed $i$, and denote a general element of $\mathbf{\Lambda}_i$ by $\llambda_i$. Let $V_{\F,\llambda_i}$ denote the hypersurface in $\PP^{n-1}$ defined by the form $G_i$ in \eqref{eq:pencil}. It is then a standard fact that the locus of $(\F,\mathbf{\lambda}_1,\dotsc,\mathbf{\lambda}_r)$ such that the intersection $V_{\F,\llambda_r} \cap \dotsb \cap V_{\F,\llambda_{r-j+1}}$ is proper and smooth for each $j=1,\dotsc,r$ is an open subset $\mathcal{U}$ of 
$\PP_{d_1}\times \dotsc \times \PP_{d_r} \times \LLambda_1\times \dotsb \LLambda_r$. Its complement, being a closed subset, is thus defined by the simultaneous vanishing of a finite  collection of multihomogeneous forms $\cF_i(\F;\llambda_1;\dotsc;\llambda_r)$ with coefficients in $\ZZ$. 

For the specific $r$-tuple $\F$ under consideration, we have already verified the non-emptiness of $\pi^{-1}(\F)$, where $\pi$ denotes the projection from $\mathcal{U}$ to $\PP_{d_1}\times \dotsc \times \PP_{d_r}$. In particular, at least one of these forms $\cG(\llambda_1;\dotsc;\llambda_r):=\cF_i(\F;\llambda_1;\dotsc;\llambda_r)$, say, does not vanish identically in $\llambda_1,\dotsc,\llambda_r$. Clearly, we have bounds 
\[
\Vert \cG \Vert\ll_{n,\d} \Vert \F \Vert^{O_{n,\d}(1)} \quad \text{and} \quad  \deg(\cG) \ll_{n,\d} 1.
\]
As an easy consequence of Lemma \ref{lem:trivial}, we may then find a solution $\llambda_1,\dotsc,\llambda_r$ to $\cG(\llambda_1;\dotsc;\llambda_r)\neq 0$ satisfying $|\llambda_i| \ll_{n,\d} \Vert \F \Vert^{O_{n,\d}(1)}$. This in turn gives the desired bound on $\Vert\g\Vert$.
\end{proof}

By Lemma \ref{lem:optimalsystem} we may now assume that $s_\infty(\hat{\f_i}) = -1$ for each $i=0,\dotsc,D-2$. Indeed, the possible increase in $\Vert \f \Vert$ is absorbed by the implied constant in \eqref{eq:different}. 

\begin{lemma}
\label{lem:chooseprimes}
Suppose that
\[
\rho_\infty(\f) = r \text{ and } s_{\infty}(\hat{\f}_0) = \dotsb = s_{\infty}(\hat{\f}_{D-2}) = -1.
\]
Then, provided $\xi \gg \log \Vert \F \Vert$, there exist $m$ distinct primes 
\(
p_1,\dotsc,p_m
\)
with $p_j \asymp \xi$, such that  
\begin{equation}
\label{eq:goodprimes}
\rho_{p_j}(\f) = r \text{ and } s_{p_j}(\hat{\f}_0) = \dotsb = s_{p_j}(\hat{\f}_{D-2}) = -1  
\end{equation}
for all $j=1,\dotsc,m$. The implied constants depend only on $n$, $\d$ and $m$.
\end{lemma}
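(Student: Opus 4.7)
The plan is to identify a single nonzero integer $\Delta$ whose prime factors include every ``bad'' prime---that is, every $p$ for which \eqref{eq:goodprimes} fails---and for which $\log|\Delta| \ll_{n,\d} \log \Vert \F \Vert$. Combined with Chebyshev's bound on the density of primes, this will directly yield sufficiently many good primes of size $\asymp \xi$.

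For the first step, I would argue as follows. For each $i \in \{0,\dotsc,D-2\}$, the locus in the Hilbert scheme of $(r_{i+2}+\dotsb+r_D)$-tuples of forms in $\PP^{n-1}_\ZZ$ of the prescribed multidegree, where the tuple fails to define a smooth complete intersection of the correct codimension, is a proper closed subset. It is cut out by the simultaneous vanishing of a collection of multihomogeneous polynomial identities in the coefficients of the forms, of degree $O_{n,\d}(1)$ and with coefficients in $\ZZ$. By hypothesis, at least one such identity evaluates to a nonzero integer at our specific system of leading forms; call this integer $\Delta_i$. The standard size bound for a polynomial of bounded degree and integer coefficients evaluated at an integer point then gives
\[
\log|\Delta_i| \ll_{n,\d} \log \Vert \F \Vert.
\]
Setting $\Delta := D! \prod_{i=0}^{D-2}\Delta_i$, we have $\log |\Delta| \ll_{n,\d} \log \Vert \F \Vert$, and any prime $p \nmid \Delta$ simultaneously satisfies \eqref{eq:goodprimes} and is coprime to $D!$ as required in \eqref{eq:primes_gcd}.

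Second, I would invoke Chebyshev's estimate to conclude that the interval $(\xi, 2\xi]$ contains $\gg \xi/\log \xi$ primes, while the number of these that divide $\Delta$ is at most $\log|\Delta|/\log\xi \ll_{n,\d} \log \Vert \F \Vert / \log \xi$. Hence, once the implied constant in the hypothesis $\xi \gg \log \Vert \F \Vert$ is chosen sufficiently large in terms of $n$, $\d$ and $m$, at least $m$ primes in $(\xi, 2\xi]$ avoid $\Delta$, and we take $p_1,\dotsc,p_m$ to be any $m$ of them.

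The main obstacle lies in the first step: one must verify that the defining equations of the bad locus in the Hilbert scheme can be chosen of degree $O_{n,\d}(1)$, so that evaluation at $\F$ gives a quantity of polynomial size in $\Vert \F \Vert$. This is essentially the content of \cite[Lemma 2.8]{Marmon08}, and in fact the very same input already underlies Lemma \ref{lem:hyperplane} in this paper. The construction of the relevant identities proceeds via iterated resultants and the Jacobian criterion, in much the same spirit as the parameter-space argument appearing in the proof of Lemma \ref{lem:optimalsystem} above.
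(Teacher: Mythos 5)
Your proposal is correct and follows essentially the same route as the paper: both arguments express the bad primes as divisors of a nonzero integer obtained by evaluating a multihomogeneous form of bounded degree (as in the parameter-space discussion in Lemma~\ref{lem:optimalsystem}) at the system of leading forms, derive the bound $\log|\Delta| \ll_{n,\d} \log\Vert\F\Vert$ from the degree control, and then invoke Chebyshev to produce $m$ primes in a dyadic window around $\xi$ avoiding $\Delta$. The only cosmetic differences are that the paper packages all the conditions into a single form $\cG$ cutting out the full bad locus rather than a product of per-$i$ forms $\Delta_i$, and your explicit inclusion of the factor $D!$ makes the compatibility with condition \eqref{eq:primes_gcd} visible, a point the paper leaves implicit.
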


\begin{proof}
By an argument similar to the one in the proof of Lemma \ref{lem:optimalsystem}, the set of $\F=(F_1,\dotsc,F_r)$ such that either $\rho_\infty(\F) < r$ or $s_\infty(\hat{\F}_i) > -1$ for some $i$ is a closed subset of $\PP_{d_1}\times \dotsb \times \PP_{d_r}$, defined by a collection of multihomogeneous forms. By assumption, at least one of these, say $\cG(\F)$, does not vanish when $\F$ is taken to be the system of leading forms of the system $\f$ in the hypotheses. Furthermore, any prime $p$ violating condition \eqref{eq:goodprimes} has to divide the integer $\cG(\F)$.

It is a straightforward consequence of Chebyshev's theorem that for any natural number $m$, there is a constant $C$ depending only on $m$ such that for any natural number $A\geq 3$, the interval $(\log A,C \log A]$ contains at least $m$ distinct primes not dividing $A$. From this the lemma follows by taking $A = \cG(\F)$ and observing that 
\[
\log \cG(\F) \ll_{n,\d} \log \Vert F \Vert.  
\]
\end{proof}

We shall derive the estimate in Theorem \ref{thm:different} by applying Proposition \ref{prop:heart} with $m = D-2$. A suitable smooth weight function is given by
\begin{equation*}
 W(\mathbf t) = \prod_{i=1}^n w(t_i/2), \text{ where } 
 w(t)=
 \begin{cases}
  \exp(-1/(1-t^2)), & \vert t \vert < 1,\\
  0, & \vert t \vert \geq 1.
 \end{cases}
\end{equation*}
The asymptotic formula \eqref{eq:asymptotic'} in particular yields an upper bound for $N_W(\f,B,q)$. Assuming that $s=-1$, the optimal such upper bound will be obtained when
\begin{equation}
\label{eq:xi}
\xi = B^{1-\eta}, 
\end{equation} 
where $\eta$ is the quantity defined in Theorem \ref{thm:different}. Provided that $n > 2^{D-2}\Delta$, this implies that $\xi \geq B^{1/2}$, as required in Proposition \ref{prop:heart}. Suppose first that 
\[
B \gg (\log\Vert \F \Vert)^{1/(1-\eta)}.
\]
In view of Lemma \ref{lem:chooseprimes}, we may then indeed apply Proposition \ref{prop:heart}, and the primes $p_0,\dotsc,p_{D-2}$ may be chosen so that $s=-1$. We obtain
\[
N(\f,B) \ll B^n \xi^{-\D'} + B^n \xi^{-\cR} \left(\frac{\xi}{B}\right)^{(n-1)/2^{D-2}} + B^n \xi^{-r/2} \left(\frac{\xi}{B} \right)^{n/2}.
\]
In \eqref{eq:xi}, we have already set $\xi$ to make the first two terms on the right hand side equal. Verifying that the third term is then of negligible size as soon as $D \geq 4$, we arrive at the bound
\[
N(\f,B) \ll B^{n-\D'(1-\eta)},
\]
thus establishing the bound in Theorem \ref{thm:different} in this case. Here, the implied constant depends only on $n$ and $\d$.

In the complementary case where $B \ll (\log\Vert \F \Vert)^{1/(1-\eta)}$, we may for example use the trivial bound from Lemma \ref{lem:trivial} to obtain
\begin{align*}
N(\f,B) &\ll B^{n-r} \ll B^{n-\D'(1-\eta)} (\log\Vert \F \Vert)^{\cD'-r/(1-\eta)} \\
&\leq B^{n-\D'(1-\eta)} (\log\Vert \F \Vert)^{\cD'}, 
\end{align*}
as desired. This completes the proof of Theorem \ref{thm:different}.

To prove Theorem \ref{thm:same}, one instead takes $m = d -2 \geq 2$ in the application of Proposition \ref{prop:heart}. More generally, for certain given specifications of the data $n$, $\d$, the optimal upper bound for $N(\f,B)$ may be attained by choosing some other value of $m \in \{0,\dotsc,D-2\}$. Thus, there is a whole range of intermediate results between Theorem \ref{thm:same} and Theorem \ref{thm:different}, the precise formulation of which is left to the reader.

\section{Concluding remarks}

Continuing the discussion from Remark \ref{rem:improvement}, it would be desirable to have a result where the exponent of $B$ approaches the heuristic one also for systems of truly differing degree. In fact, the expression $n -\cD'$, which is the asymptotic value of the exponent of $B$ in Theorem \ref{thm:different}, as $n \to \infty$, would be the expected one for a system where all polynomials of degree $d \leq D-1$ were replaced by polynomials of degree $d-1$.  This is because our setup at the moment needs to exclude polynomials of degree $\leq 2$ in each differencing step. 
 
One could also imagine using ideas from \cite{Marmon10} to improve our results. That paper considers the case where $r = 1$ and $m=2$. Using ideas due to Salberger \cite{Salberger-Integral_points}, an alternative differencing procedure is used, which allows one to keep the original polynomial in play in each step (see Remark \ref{rem:salberger}). This produces varieties of gradually increasing codimension, giving stronger bounds. But it is not obvious how to turn the arguments in \cite{Marmon10} into an iterative approach, nor do the geometric considerations playing the role of Lemma \ref{lem:HBlemma2} in \cite{Marmon10} generalize readily to the case $r > 1$. 

\subsection*{Acknowledgements}
I wish to thank the anonymous referee for valuable remarks, and J\"org Br\"udern for helpful discussions concerning this paper.

\bibliographystyle{plain}
\bibliography{ratpoints}

\end{document}